\theoremstyle{plain}
\numberwithin{equation}{section}
\newtheorem{theorem}{Theorem}[section]
\newtheorem{cor}[theorem]{Corollary}
\newtheorem{lemma}[theorem]{Lemma}
\theoremstyle{definition}
\newtheorem*{acknowledgement}{\textnormal{\textbf{Acknowledgements}}}
\newtheorem{definition}[theorem]{Definition}
\newtheorem{remark}{Remark}[theorem]
\newtheorem{prop}[theorem]{Proposition}
\newtheorem{example}[theorem]{Example}
\newcommand{\x}{\mathbf{x}}
\begin{document}

\title[Birkhoff-James orthogonality in complex Banach spaces]{Birkhoff-James orthogonality in complex Banach spaces and  Bhatia-\v{S}emrl Theorem revisited}

\author[Roy Bagchi Sain]{Saikat Roy, Satya Bagchi, Debmalya Sain}

\address[Roy]{Department of Mathematics\\ National Institute of Technology Durgapur\\ Durgapur 713209\\ West Bengal\\ INDIA\\}
\email{saikatroy.cu@gmail.com}

\address[Bagchi]{Department of Mathematics\\ National Institute of Technology Durgapur\\ Durgapur 713209\\ West Bengal\\ INDIA\\}
\email{satya.bagchi@maths.nitdgp.ac.in}

\address[Sain]{Department of Mathematics\\ Indian Institute of Science\\ Bengaluru 560012\\ Karnataka \\INDIA\\ }
\email{saindebmalya@gmail.com}

                \newcommand{\acr}{\newline\indent}

\subjclass[2020]{Primary 47A30, Secondary 47A12, 46B20}
\keywords{Birkhoff-James orthogonality; smoothness; Toeplitz-Hausdorff Theorem; Bhatia-\v{S}emrl Theorem; complex Banach spaces.}

\maketitle

\begin{abstract}
We explore Birkhoff-James orthogonality of two elements in a complex Banach space by using the directional approach. Our investigation illustrates the geometric distinctions between a smooth point and a non-smooth point in a complex Banach space. As a concrete outcome of our study, we obtain a new proof of the Bhatia-\v{S}emrl Theorem on orthogonality of linear operators.
\end{abstract}

\section{Introduction}

The importance of Birkhoff-James orthogonality in the study of the geometry of Banach spaces is undeniable. Several mathematicians applied Birkhoff-James orthogonality techniques to investigate the geometry of Banach spaces from time to time; see \cite{BS,BP,K,PSMM,S,SP,SRBB}. In the present work, we study a comparatively weaker version of Birkhoff-James orthogonality in complex Banach spaces, namely, the directional orthogonality. The objective of the current article is twofold: we characterize smoothness of an element in complex Banach spaces and engage the directional orthogonality techniques to obtain a different proof of the classical Bhatia-\v{S}emrl Theorem.

\subsection{Notations and terminologies} Letters $ \mathbb{X},~ \mathbb{Y} $ denote Banach spaces and the symbol $ \mathbb{H} $ is reserved for a Hilbert space. Let $ S_{\mathbb{X}} $ denote the unit sphere of the Banach space $ \mathbb{X}, $ i.e., $ S_{\mathbb{X}} := \{ x \in \mathbb{X} :~ \|x\|=1 \}. $ The unit circle in $ \mathbb{C} $ is denoted by $ S^{1}, $ i.e., $ S^{1} := \{ \gamma\in \mathbb{C} :~ |\gamma|=1 \}.$ Let $\mu \in \mathbb{C}$ be non-zero. Then $\mu := |\mu|e^{i\theta}$, where $\theta$ denotes the argument of $\mu.$ Throughout the article we consider the interval $[0,2\pi)$ to be the domain of argument. Let $ \mathsf{Re}~\mu $, $ \mathsf{Im}~\mu $, and $ \mathsf{arg}~\mu $ denote as usual the real part of $ \mu $, the imaginary part of $ \mu $, and the argument of $\mu,$ respectively. Given any $z:= a+ib \in \mathbb{C},$ we denote the conjugate of $z$ by $\overline z:=a-ib.$\\

Unless otherwise stated, we consider Banach spaces, in particular Hilbert spaces, (mostly) over the field $\mathbb{C}$ of complex numbers.\\

Let $ \mathbb{L}(\mathbb{X},\mathbb{Y}) $ denote the Banach space of all bounded linear operators from $ \mathbb{X} $ to $ \mathbb{Y} $ endowed with the usual operator norm. We write $ \mathbb{L}(\mathbb{X},\mathbb{Y}) = \mathbb{L}(\mathbb{X}), $ if $ \mathbb{X}=\mathbb{Y}. $ For any $ T \in \mathbb{L}(\mathbb{X},\mathbb{Y}), $ we denote the norm attainment set of $ T $ by $ M_T $, i.e., $$ M_T:= \{ x \in S_{\mathbb{X}} :~ \|Tx\| = \| T \| \}.$$ Given any two elements $ x,y \in \mathbb{X}, $ we say that $ x $ is Birkhoff-James orthogonal \cite{B,J,Ja} to $ y, $ written as $ x \perp_B y, $ if $$ \|x+\lambda y\| \geq \|x\|,~\mathrm{~for~ all~ }~ \lambda \in \mathbb{C}. $$ Let $x\in \mathbb{X}$ be non-zero and let $\mathbb{J}(x)$ be a subset of $S_{\mathbb{X}^*},$ defined by
\begin{align*}
\mathbb{J}(x):=\{x^*\in S_{\mathbb{X}^*}:~ x^*(x)=\|x\|\}.
\end{align*}
It is well-known \cite[Page 268]{Ja} that in the real case, $x\perp_B y$ (if and) only if there exists $u^*\in \mathbb{J}(x)$ such that $u^*(y)=0.$ It comes easily from convex analysis that the same characterization holds true in the complex case as well. We say that $x$ is smooth if the collection $\mathbb{J}(x)$ is singleton. The Banach space $\mathbb{X}$ is called smooth, if each of its non-zero elements is smooth.\\

We would like to remark here that another study on complex Birkhoff-James orthogonality was conducted in \cite{PSMM}, although most of our results in the present article differ in spirit from \cite{PSMM}. However, we do use some of the notations from \cite{PSMM}, which we mention below.

\begin{definition}
Let $ \mathbb{X} $ be a complex Banach space and let $ \gamma\in S^1 $. For any two elements $ x, y \in \mathbb{X} $, $ x $ is said to be \emph{orthogonal} to $ y $ \emph{in the direction of $ \gamma $}, written as $ x\perp_\gamma y $, if $ \| x + t \gamma y \| \geq \| x \|, $ for all $ t\in \mathbb{R} $.
\end{definition}

\begin{definition}
Let $ \mathbb{X} $ be a complex Banach space and let $ \gamma \in S^1 $. For any two elements $ x,y\in \mathbb{X} $, we say that $ y $ lies in the \emph{positive part} of $ x $ in the direction of $ \gamma $, written as $ y \in (x)_{\gamma}^{+} $, if $ \| x + t \gamma y \| \geq \| x \|, $ for all $ t\geq 0 $. Similarly, we say that $ y $ lies in the \emph{negative part} of $ x $ in the direction of $ \gamma $, written as $ y \in (x)_{\gamma}^{-} $, if $ \| x + t \gamma y \| \geq \| x \|, $ for all $ t\leq 0 $.
\end{definition}

The Bhatia-\v{S}emrl Theorem \cite{BS} provides a nice characterization of Birkhoff-James orthogonality of linear operators on a finite-dimensional Hilbert space in terms of orthogonality of certain special vectors in the ground space.

\begin{theorem}(Bhatia-\v{S}emrl Theorem)\label{Bhatia-Semrl}
Let $ \mathbb{H} $ be a finite-dimensional Hilbert space. Let $ T, A \in \mathbb{L}(\mathbb{H}) $. Then $ T \perp_B A $ if and only if there exists $ x\in M_T $ such that $ \langle Tx, Ax \rangle = 0. $
\end{theorem}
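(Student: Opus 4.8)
The plan is to prove the two implications separately; the ``only if'' part is where the directional machinery does the work. For the ``if'' part, if $x\in M_T$ and $\langle Tx,Ax\rangle=0$, then for every $\lambda\in\mathbb{C}$ the cross term in the expansion $\|(T+\lambda A)x\|^2=\|Tx\|^2+2\,\mathsf{Re}\big(\overline\lambda\langle Tx,Ax\rangle\big)+|\lambda|^2\|Ax\|^2$ vanishes, whence $\|T+\lambda A\|^2\ge\|(T+\lambda A)x\|^2\ge\|T\|^2=\|Tx\|^2$, i.e. $T\perp_B A$.

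Now assume $T\perp_B A$. Writing an arbitrary scalar as $\lambda=t\gamma$ with $t\ge 0$ and $\gamma\in S^1$, the hypothesis says precisely that $A\in(T)^+_\gamma$ for every $\gamma\in S^1$. I would first record the standard finite-dimensional fact that $M_T=S_W$, where $W:=\ker\big(T^*T-\|T\|^2 I\big)$ is the top eigenspace of $T^*T$; in particular $M_T$ is nonempty and compact. Fix $\gamma\in S^1$ and, for $t>0$, pick $x_t\in M_{T+t\gamma A}$. Since $A\in(T)^+_\gamma$, we have $\|(T+t\gamma A)x_t\|=\|T+t\gamma A\|\ge\|T\|$, and expanding the square gives
\[
\|T\|^2\ \le\ \|Tx_t\|^2+2t\,\mathsf{Re}\big(\overline\gamma\langle Tx_t,Ax_t\rangle\big)+t^2\|Ax_t\|^2\ \le\ \|T\|^2+2t\,\mathsf{Re}\big(\overline\gamma\langle Tx_t,Ax_t\rangle\big)+t^2\|A\|^2 ,
\]
so that $\mathsf{Re}\big(\overline\gamma\langle Tx_t,Ax_t\rangle\big)\ge-\tfrac t2\|A\|^2$. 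Compactness of $S_{\mathbb H}$ produces a sequence $t_n\downarrow 0$ with $x_{t_n}\to x_\gamma$; the same expansion together with $\|(T+t_n\gamma A)x_{t_n}\|\ge\|T\|$ forces $\|Tx_\gamma\|=\|T\|$, i.e. $x_\gamma\in M_T$, and letting $t_n\downarrow 0$ in the last inequality yields $\mathsf{Re}\big(\overline\gamma\langle Tx_\gamma,Ax_\gamma\rangle\big)\ge 0$. Hence the compact set $K:=\{\langle Tx,Ax\rangle:\ x\in M_T\}\subset\mathbb{C}$ meets the closed half-plane $\{z:\ \mathsf{Re}(\overline\gamma z)\ge 0\}$ for every $\gamma\in S^1$.

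It remains to upgrade this to $0\in K$, and here the Toeplitz--Hausdorff Theorem is the crucial ingredient. Since $M_T=S_W$ and $\langle Tx,Ax\rangle=\langle A^*Tx,x\rangle=\langle (PA^*TP)x,x\rangle$ for $x\in W$, where $P$ is the orthogonal projection onto $W$, the set $K$ is exactly the numerical range of the operator $(PA^*TP)|_W$ on the finite-dimensional space $W$; hence $K$ is convex (and compact). If $0\notin K$, let $z_0$ be the point of $K$ nearest to the origin; then $z_0\neq 0$, and the variational inequality for the nearest-point projection onto a convex set gives $\mathsf{Re}(\overline{z_0}z)\ge|z_0|^2>0$ for all $z\in K$. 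Taking $\gamma_0:=-z_0/|z_0|\in S^1$, we get $\mathsf{Re}(\overline{\gamma_0}z)<0$ for every $z\in K$, contradicting the fact that $K$ meets the half-plane $\{z:\ \mathsf{Re}(\overline{\gamma_0}z)\ge 0\}$. Therefore $0\in K$, i.e. there exists $x\in M_T$ with $\langle Tx,Ax\rangle=0$.

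The main obstacle I anticipate is the limiting step: one must ensure that the norm-attaining directions $x_t$ of the perturbed operators $T+t\gamma A$ accumulate inside $M_T$ and that the one-sided estimate on $\mathsf{Re}\big(\overline\gamma\langle Tx_t,Ax_t\rangle\big)$ survives the passage to the limit — this is exactly where finite-dimensionality of $\mathbb H$, equivalently compactness of $M_T$, is indispensable. The second, more conceptual, point is recognizing that a family of supporting half-plane conditions indexed by $S^1$ can be reconciled with $0\in K$ only once convexity of $K$ is known, which is the precise role played by Toeplitz--Hausdorff.
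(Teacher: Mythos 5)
Your proof is correct, but it reaches the conclusion by a genuinely different route from the paper for the key step. The paper's necessity argument runs through its directional-orthogonality machinery: it invokes Theorem 2.6 of the cited work of Paul--Sain--Mal--Mandal (stated here as the theorem that $T\perp_B A$ iff for every $\gamma\in S^1$ there is $x\in M_T$ with $Tx\perp_\gamma Ax$, which requires connectedness of $M_T$), translates $Tx\perp_\gamma Ax$ into the equality $\mathsf{Re}\,\gamma\langle Ax,Tx\rangle=0$ via its Hilbert-space characterization of directional orthogonality, and combines this with its generalized Toeplitz--Hausdorff theorem (convexity of $\{\langle Ax,Tx\rangle:\ x\in M_T\}$) and a nearest-point separation, arguing contrapositively. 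You bypass the operator-level directional theorem entirely: from $T\perp_B A$ you extract, for each $\gamma$, only the one-sided condition $\mathsf{Re}\big(\overline\gamma\langle Tx_\gamma,Ax_\gamma\rangle\big)\ge 0$ at some $x_\gamma\in M_T$, by the classical perturbation trick of taking norm-attaining vectors of $T+t\gamma A$ and letting $t\downarrow 0$ (finite-dimensionality giving the needed compactness), and you obtain convexity of $K$ from the classical Toeplitz--Hausdorff theorem applied to the compression $(PA^*TP)|_W$ rather than from the paper's generalization -- both uses rest on the same fact $M_T=S_W$ from Sain--Paul. The final separation step (nearest point of a compact convex set and a violated half-plane) is essentially identical in both arguments. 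What your route buys is self-containedness and economy: it needs no imported operator-orthogonality theorem, no connectedness discussion beyond $M_T=S_W$, and only the classical numerical-range convexity, with the weaker per-direction inequality being upgraded by convexity exactly as the paper's equality is. What the paper's route buys is a demonstration of its new framework: the directional-orthogonality characterizations are the point of the article, and the proof is designed to exhibit them; your argument is closer in spirit to the classical compactness proofs in the earlier literature.
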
 

In view of this seminal result, the study of the Bhatia-\v{S}emrl type theorems in the setting of real Banach spaces has been conducted in \cite{S,SP}. Indeed, using Theorem 2.1 and Theorem 2.2 of \cite{SP}, an elementary proof of the Bhatia-\v{S}emrl Theorem can be obtained in the real setting.  In our present work, we study Birkhoff-James orthogonality in complex Banach spaces from a geometric point of view. As an outcome of our exploration, we furnish an elementary proof of the Bhatia-\v{S}emrl Theorem in the complex case. We note that the Toeplitz-Hausdorff Theorem was substantially used in the proof of the Bhatia-\v{S}emrl Theorem in each of \cite{BS,BP,TA}. We also apply the Toeplitz-Hausdorff Theorem in our proof. However, the motivation behind our approach is different compared to the approaches taken in \cite{BS,BP,K,TA} to prove the Bhatia-\v{S}emrl Theorem.\\

The present work is organized in the following manner. In the second section, we investigate the directional orthogonality in a complex Banach space. In the third section, we describe the directional orthogonality in terms of linear functionals and characterize smoothness of an element in the underlying space. In the final section, we study Birkhoff-James orthogonality of linear operators between finite-dimensional complex Banach spaces. We apply the ideas developed in this article, along with the Toeplitz-Hausdorff Theorem, to prove the Bhatia-\v{S}emrl Theorem.

\section{Directional orthogonality in complex Banach spaces}

Let us begin with an easy proposition. The results of this proposition will be used extensively throughout this article. We omit the proof of this proposition as it is trivial. We would like to mention that some of the statements of the following proposition are already mentioned in Proposition 2.1 of \cite{PSMM}.

\begin{prop}\label{basic preliminaries}
Let $ \mathbb{X} $ be a complex Banach space and let $ \gamma\in S^1 $. Then for any $ x, y \in \mathbb{X} $, the following hold true:\\
(i) Either $ y\in (x)_\gamma^{+} $, or $ y\in (x)_\gamma^{-} $.\\
(ii) $(x)_\gamma^{+}=(x)_{-\gamma}^{-}. $ \\
(iii) $ x\perp_\gamma y $ if and only if $ x\perp_{-\gamma} y $ if and only if $ y\in (x)_\gamma^{+} \cap (x)_\gamma^{-} $ .
\end{prop}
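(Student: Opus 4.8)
The plan is to reduce all three parts to a single elementary observation: for fixed $x, y \in \mathbb{X}$ and $\gamma \in S^1$, the function $\varphi : \mathbb{R} \to \mathbb{R}$ given by $\varphi(t) := \| x + t\gamma y \|$ is convex, being the composition of the norm with the affine map $t \mapsto x + t\gamma y$; moreover $\varphi(0) = \|x\|$. Everything then follows either from a convexity argument or from a change of variable.

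For part (i), I would argue by contradiction. If $y \notin (x)_\gamma^{+}$ and $y \notin (x)_\gamma^{-}$, then there exist $t_1 > 0$ and $t_2 < 0$ with $\varphi(t_1) < \varphi(0)$ and $\varphi(t_2) < \varphi(0)$. Writing $0$ as the convex combination $0 = \lambda t_1 + (1-\lambda) t_2$ with $\lambda = \tfrac{-t_2}{t_1 - t_2} \in (0,1)$, convexity of $\varphi$ yields $\varphi(0) \le \lambda \varphi(t_1) + (1-\lambda)\varphi(t_2) < \varphi(0)$, a contradiction. Hence at least one of the two inclusions holds.

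For part (ii), the condition $y \in (x)_\gamma^{+}$ reads $\|x + t\gamma y\| \ge \|x\|$ for all $t \ge 0$; the substitution $s = -t$ turns this into $\|x + s(-\gamma)y\| \ge \|x\|$ for all $s \le 0$, which is exactly the defining condition of $y \in (x)_{-\gamma}^{-}$. Since $t \mapsto -t$ is a bijection of $[0,\infty)$ onto $(-\infty,0]$, the two sets coincide. For part (iii), the definition of $x \perp_\gamma y$ is $\|x + t\gamma y\| \ge \|x\|$ for all $t \in \mathbb{R}$; splitting the range of $t$ into $t \ge 0$ and $t \le 0$ shows this is equivalent to $y \in (x)_\gamma^{+} \cap (x)_\gamma^{-}$. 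The equivalence $x \perp_\gamma y \iff x \perp_{-\gamma} y$ again follows from the bijection $t \mapsto -t$ of $\mathbb{R}$, under which the quantities $\|x + t\gamma y\|$ and $\|x + t(-\gamma)y\|$ merely exchange roles.

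As for the main obstacle: there is essentially none of substance. The only place requiring more than a substitution is part (i), and even there the sole ingredient is the standard fact that a convex function on $\mathbb{R}$ cannot drop strictly below its value at an interior point on both sides of that point. This is why the statement is described as trivial and its proof omitted.
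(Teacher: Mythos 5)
Your proposal is correct, and since the paper deliberately omits the proof as trivial, your argument simply supplies the expected details: the convexity of $t \mapsto \|x + t\gamma y\|$ handles (i), and the substitution $t \mapsto -t$ handles (ii) and (iii). This is exactly the standard reasoning the authors had in mind, so there is nothing further to add.
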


Our next observation is also somewhat expected. For a given pair of non-zero elements $ x $ and $ y $, in a complex Banach space $ \mathbb{X} $, it may happen that $ x\perp_\gamma y, $ for some $ \gamma\in S^1 $ but $ x\not\perp_B y $. We furnish the following easy example in support of our statement.

\begin{example}
Let $ \mathbb{X} $ be the two-dimensional complex inner product space with the usual inner product and let $ x = ( 1,0 ) $ and $ y = ( 1, i) $. Then for any $ t\in \mathbb{R} $, $$ \| x + t i y \| = \| ( 1 + t i , - t ) \| = ( 1 + 2 t^2 )^{\frac{1}{2}} \geq \| x \|. $$ In other words, $ x\perp_i y $. However, $ x\not\perp_B y $ as $ \langle x, y \rangle = 1$.  
\end{example}  

We present the first non-trivial result of the article, which is also geometrically illustrating.

\begin{theorem}\label{existence of direction}
Let $ \mathbb{X} $ be a complex Banach space and let $ x , y \in \mathbb{X} $ be non-zero. Then there exists $ \beta\in S^1 $, such that $ x\perp_{\beta} y $. Moreover, given any $\gamma\in S^1$, if $ y = \lambda x ,$ for some $ \lambda \in \mathbb{C}\setminus \{ 0 \} $, then $ x\perp_\gamma y $ if and only if $ \lambda\gamma $ is purely imaginary.
\end{theorem}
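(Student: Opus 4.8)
The plan is to prove the two assertions separately. For the existence of a direction $\beta$ with $x \perp_\beta y$, I would define the function $f : S^1 \to \mathbb{R}$ by $f(\gamma) := \inf_{t \in \mathbb{R}} \|x + t\gamma y\|$, or equivalently work with the one-sided quantity. The key is Proposition \ref{basic preliminaries}(i): for every $\gamma \in S^1$, either $y \in (x)_\gamma^+$ or $y \in (x)_\gamma^-$, and by (ii) we have $(x)_\gamma^+ = (x)_{-\gamma}^-$. So consider the set $P := \{ \gamma \in S^1 : y \in (x)_\gamma^+ \}$. By (i)--(ii), $P$ and $-P$ together cover $S^1$, so $P$ is nonempty; and $\gamma \in P \iff -\gamma \in (x)_{-\gamma}^+ $ fails in general, but $-P = \{\gamma : y \in (x)_\gamma^-\}$. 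The idea is that the map $\gamma \mapsto (\text{is } y \in (x)_\gamma^+?)$ changes as $\gamma$ traverses the circle, and a connectedness/continuity argument forces a $\beta$ lying in both $P$ and $-P$, i.e. $y \in (x)_\beta^+ \cap (x)_\beta^- $, which by (iii) is exactly $x \perp_\beta y$. Concretely, I would parametrize $\gamma = e^{is}$ and study $g(s) := \inf_{t \le 0}\|x + te^{is}y\| - \|x\| $ versus the $t \ge 0$ version; continuity of $s \mapsto \|x + te^{is}y\|$ and a sign-change (intermediate value) argument on the circle yields the crossing. An alternative, cleaner route: by Hahn--Banach pick $x^* \in \mathbb{J}(x)$, and choose $\beta \in S^1$ so that $x^*(\beta y)$ is purely imaginary (possible since $x^*(y)$ is a fixed complex number and multiplying by a suitable unit modulus rotates its argument to $\pm\pi/2$, unless $x^*(y) = 0$ in which case any $\beta$ works); then for real $t$, $\mathsf{Re}\, x^*(x + t\beta y) = \|x\| + t\,\mathsf{Re}(x^*(\beta y)) = \|x\|$, so $\|x + t\beta y\| \ge |x^*(x + t\beta y)| \ge \|x\|$, giving $x \perp_\beta y$. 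I expect to use this functional-based argument as it is the shortest.

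For the "moreover" part, suppose $y = \lambda x$ with $\lambda \in \mathbb{C}\setminus\{0\}$. Then $\|x + t\gamma y\| = \|x + t\gamma\lambda x\| = |1 + t\gamma\lambda|\,\|x\|$. So $x \perp_\gamma y$ holds iff $|1 + t\gamma\lambda| \ge 1$ for all $t \in \mathbb{R}$. Writing $\gamma\lambda = a + ib$ with $a, b \in \mathbb{R}$, we have $|1 + t\gamma\lambda|^2 = (1 + ta)^2 + (tb)^2 = 1 + 2ta + t^2(a^2 + b^2)$. This is $\ge 1$ for all real $t$ iff $2ta + t^2|\gamma\lambda|^2 \ge 0$ for all $t$, and since the quadratic in $t$ has value $0$ at $t=0$ and leading coefficient $|\gamma\lambda|^2 > 0$, it is nonnegative for all real $t$ iff its linear coefficient $2a$ vanishes, i.e. iff $a = \mathsf{Re}(\gamma\lambda) = 0$, i.e. iff $\gamma\lambda$ is purely imaginary. (If $a \ne 0$, taking $t$ of sign opposite to $a$ and small in magnitude makes $|1 + t\gamma\lambda| < 1$.)

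The main obstacle is the first assertion if one insists on a purely "directional," functional-free proof via Proposition \ref{basic preliminaries}: one must carefully verify that the continuity of $s \mapsto \inf_{t \le 0}\|x + te^{is}y\|$ together with the covering property yields an honest crossing point rather than, say, a jump. This is why I would instead lean on the Hahn--Banach functional argument, where the only subtlety is the degenerate case $x^*(y) = 0$ (handled trivially, as then $x \perp_\gamma y$ for every $\gamma$), making the argument essentially a one-line computation. The "moreover" direction is entirely routine once the reduction $\|x + t\gamma y\| = |1 + t\gamma\lambda|\|x\|$ is made.
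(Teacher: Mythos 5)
Your proof is correct, but for the existence part you take a genuinely different route from the paper. The paper argues intrinsically and topologically: assuming $x\not\perp_\beta y$ for every $\beta\in S^1$, it partitions $S^1$ into the two sets $W_1=\{\beta: y\in (x)_\beta^-\setminus (x)_\beta^+\}$ and $W_2=\{\beta: y\in (x)_\beta^+\setminus (x)_\beta^-\}$, shows both are nonempty, disjoint and closed (closedness is checked by passing to the limit in $\|x+t\beta_n y\|\ge\|x\|$ for each fixed $t$, which is exactly how the ``jump'' worry you raise about the inf-function is sidestepped), and then contradicts the connectedness of $S^1$. Your preferred argument instead invokes Hahn--Banach: pick $x^*\in\mathbb{J}(x)$, rotate by $\beta\in S^1$ so that $\beta x^*(y)$ is purely imaginary (any $\beta$ if $x^*(y)=0$), and conclude $\|x+t\beta y\|\ge |x^*(x+t\beta y)|\ge \mathsf{Re}\,x^*(x+t\beta y)=\|x\|$ for all real $t$. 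This is a complete and shorter proof; it is in effect the easy (sufficiency) direction of the paper's later Theorem \ref{functional Characterization} specialized to $\mu=\beta$, and there is no circularity since that direction is an independent one-line computation. What the duality-free argument of the paper buys is that it stays within the directional framework and rehearses the closedness/connectedness technique reused in Theorem \ref{Directions of orthogonality}; what your argument buys is brevity and an early illustration of the functional viewpoint developed in Section 3. Your treatment of the ``moreover'' part (reduce to $|1+t\gamma\lambda|\ge 1$ for all real $t$, expand the quadratic, and observe that nonnegativity forces the linear coefficient $2\,\mathsf{Re}(\gamma\lambda)$ to vanish) coincides with the paper's computation and is correct.
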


\begin{proof}
If $x\perp_B y$, then $\beta_0:=1$ works. If possible, suppose, $ x\not \perp_\beta y $, for any $ \beta \in S^1 $. We now consider two subsets $W_1$ and $W_2$ of $S^1$, defined as follows:
\begin{align}\label{separation}
& W_1 := \{ \beta \in S^1 :~ y \in (x)_\beta ^-\setminus (x)_\beta ^+  \},
& W_2 := \{ \beta \in S^1 :~ y \in (x)_\beta ^+\setminus (x)_\beta ^-  \}.
\end{align}
\noindent It follows from Proposition \ref{basic preliminaries} (i) that $ W_1 \cup W_2 = S^1 $. Also, it is easy to see that $ W_1 $ and $ W_2 $ are non-empty. Next, we show that $W_1$ and $W_2$ are closed subsets of $S^1.$ Assume that $(\beta_n) \subseteq W_1$ with $\beta_n \to \gamma$. Then 
\begin{align*}
\|x+t\beta_n y\| \geq \|x\|,~~\mathrm{for~all}~~ t\leq 0~\mathrm{and}~n\in \mathbb{N}.    
\end{align*}
Hence $\|x+t\gamma y\|=\underset{n}{\lim}\|x+t\beta_n y\|\geq \|x\|$ for every $t\leq 0.$
As a result, $y \in (x)_\gamma^-$.
Since $x\not\perp_\gamma y$, we have $y \notin (x)_\gamma^+$. Hence $\gamma \in W_1$. Thus, $W_1$ is closed. Similarly, we can show that $W_2$ is closed. Also, it follows from the definition of $W_1$ and $W_2$ that $W_1 \cap W_2 = \emptyset.$ Thus, so far we know that the sets $W_1$, $W_2$ are closed, disjoint, and $W_1\cup W_2=S^1.$ Hence both $W_1$, $W_2$ are also open. And from this we can easily get a contradiction. Therefore, there exists $\beta \in S^1,$ such that $x\perp_{\beta}y.$\\

For the next part of the theorem, we assume that $ x\perp_\gamma \lambda x, $ for some $\gamma \in S^1$. Let $\gamma\lambda=a+ib$, for some $a,b\in \mathbb{R}$. Note that
$$\| x + t(a+ib) x \| = \| x \| |1+ta+tib | = \|x\| \left( 1 +2ta + t^2(a^2+b^2) \right)^{\frac{1}{2}},$$
\noindent for all $t\in \mathbb{R}.$ If $a \neq 0$, then choosing $t = -\dfrac{a}{a^2+b^2} $, we have $$ 1 +2t a + t^2(a^2+b^2) = 1 - \dfrac{a^2}{a^2+b^2} < 1. $$ Consequently, 
$$\|x+t\gamma y\|= \left\| x - \dfrac{a}{a^2+b^2}(a+ib) x \right\| < \| x \|,$$
a contradiction with $x\perp_\gamma y$. Therefore, we must have $a=0.$ As a result, $ \lambda\gamma $ is purely imaginary.

Conversely, suppose that $ \lambda\gamma $ is purely imaginary. Then for every $t\in \mathbb{R}$ we have
$$\| x + t\lambda \gamma x \| = \left( 1 + (t|\lambda \gamma |)^2\right)^{\frac{1}{2}} \| x \| \geq \| x \|.$$
Therefore, $ x\perp_\gamma \lambda x $.
 
\end{proof}

A natural question in view of the above result would be regarding the structure of all possible directions of orthogonality for a given pair of vectors. Our next theorem addresses this query. We need the following definition:

\begin{definition}
Let $ \beta \in S^1 $. The \emph{half-circle determined} by $ \beta $ is a subset of $ S^1 $, defined by
\[ U_\beta := \{ \gamma\in S^1 :~ \mathsf{arg}~\gamma \in [ \mathsf{arg}~\beta ,\pi + \mathsf{arg}~\beta ] \} .\]
\end{definition}

Note that $-U_\beta= U_{-\beta}$, for any $\beta\in S^1.$ We now establish a lemma regarding the direction of orthogonality of two vectors.

\begin{lemma}\label{y belongs to negative of all direction}
Let $ \mathbb{X} $ be a complex Banach space and let $ x , y \in \mathbb{X} $. Let $ x\perp_{\beta_0} y, $ for some $ \beta_0\in S^1 $. Let $ \gamma_0\in U_{\beta_0} $ be such that $ y\in (x)_{\gamma_0}^-\setminus (x)_{\gamma_0}^+  \left( y\in (x)_{\gamma_0}^+\setminus (x)_{\gamma_0}^- \right)$. Then $ y\in (x)_{\gamma}^- \left( y\in (x)_{\gamma}^+ \right),$ for all $ \gamma\in U_{\beta_0} $.
\end{lemma}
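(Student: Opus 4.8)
The plan is to recast the directional conditions in terms of the open convex sublevel set of the map $z\mapsto\|x+zy\|$ on $\mathbb{C}\cong\mathbb{R}^2$, and then use the elementary fact that a nonempty convex set disjoint from a line lies entirely in one of the two open half-planes that the line bounds. First I would normalize: replacing $y$ by $\beta_0 y$ turns $x\perp_{\beta_0}y$ into $x\perp_{1}y$, sends the half-circle $U_{\beta_0}$ bijectively onto $U_1=\{e^{i\theta}:\theta\in[0,\pi]\}$, and transports the hypothesis on $\gamma_0$ to the same hypothesis on $\gamma_0\overline{\beta_0}$; so I may assume $\beta_0=1$. Put $L:=\{z\in\mathbb{C}:\|x+zy\|<\|x\|\}$. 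Since $z\mapsto\|x+zy\|$ is convex and continuous on $\mathbb{C}$, the set $L$ is open, convex, and does not contain $0$. Straight from the definitions, for $\gamma\in S^1$ one has $y\in(x)_\gamma^{+}$ iff the ray $\{t\gamma:t\ge 0\}$ misses $L$, $y\in(x)_\gamma^{-}$ iff $\{t\gamma:t\le 0\}$ misses $L$, and $x\perp_\gamma y$ iff the whole line $\mathbb{R}\gamma$ misses $L$. In particular $x\perp_{1}y$ gives that the real axis misses $L$, while the hypothesis $y\notin(x)_{\gamma_0}^{+}$ (resp.\ $y\notin(x)_{\gamma_0}^{-}$) gives $L\ne\emptyset$. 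Hence $L$ is a nonempty convex set disjoint from the real axis, so it lies entirely in $\{\mathsf{Im}\,z>0\}$ or entirely in $\{\mathsf{Im}\,z<0\}$.

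Next I would show that $L\subseteq\{\mathsf{Im}\,z>0\}$ forces $y\in(x)_\gamma^{-}$ for every $\gamma\in U_1$: for $\gamma=\pm 1$ this follows from $x\perp_{1}y$ together with Proposition~\ref{basic preliminaries} (iii), which also yields $x\perp_{-1}y$ and hence $y$ in both parts at each of the two endpoints; and for $\gamma=e^{i\theta}$ with $\theta\in(0,\pi)$, the punctured ray $\{t\gamma:t\le 0\}\setminus\{0\}$ lies in $\{\mathsf{Im}\,z<0\}$ and therefore misses $L$. By the mirror argument, $L\subseteq\{\mathsf{Im}\,z<0\}$ forces $y\in(x)_\gamma^{+}$ for every $\gamma\in U_1$. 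It then remains to select the correct alternative using $\gamma_0$: since $y\notin(x)_{\gamma_0}^{+}$, $\gamma_0$ cannot equal $\pm 1$ (those would force $x\perp_{\gamma_0}y$ by Proposition~\ref{basic preliminaries} (iii)), so $\gamma_0=e^{i\theta_0}$ with $\theta_0\in(0,\pi)$; then $y\notin(x)_{\gamma_0}^{+}$ says that the ray $\{t\gamma_0:t>0\}\subseteq\{\mathsf{Im}\,z>0\}$ meets $L$, which rules out $L\subseteq\{\mathsf{Im}\,z<0\}$. Thus $L\subseteq\{\mathsf{Im}\,z>0\}$ and $y\in(x)_\gamma^{-}$ for all $\gamma\in U_1=U_{\beta_0}$, which is the claim; the parenthetical statement follows by replacing $y$ with $-y$.

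I do not anticipate a serious obstacle here. The one genuinely substantive ingredient is the separation fact quoted above, which is immediate from convexity and the intermediate value theorem. The remaining points need only care, not effort: checking that the normalization $\beta_0\mapsto 1$ truly carries $U_{\beta_0}$ onto the closed upper half-circle (this uses the convention $\mathsf{arg}\in[0,2\pi)$ and the identity $-U_\beta=U_{-\beta}$), and handling the two boundary directions $\pm 1$ — both of which are settled cleanly by Proposition~\ref{basic preliminaries}.
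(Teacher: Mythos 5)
Your proof is correct and is essentially the paper's argument in a cleaner packaging: the paper's construction of the segments $\mathsf{A}$ and $\mathsf{B}$ and its triangle-inequality estimate are exactly a hands-on verification of your separation fact that the open convex sublevel set $L=\{z\in\mathbb{C}:\ \|x+zy\|<\|x\|\}$, being disjoint from the line $\mathbb{R}\beta_0$ and meeting the open half-plane containing the positive $\gamma_0$-ray, lies entirely in that half-plane. Since the underlying idea (convexity of $z\mapsto\|x+zy\|$ combined with the orthogonality line) coincides, and your normalization $\beta_0\mapsto 1$, endpoint cases $\gamma=\pm\beta_0$, and the reduction of the parenthetical case via $y\mapsto -y$ are all handled correctly, there is nothing to add.
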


\begin{proof}
Since $ y\notin (x)_{\gamma_0}^+ $, there is $ t_0 \in (0,1) $ such that $$ \| x + t_0 \gamma_0 y \| < \| x \| .$$ Consider any $t_1\in (0,t_0)$. Put $\lambda_0:=\frac{t_1}{t_0}$; thus $\lambda_0\in(0,1)$. Observe that
$$ x + t_1 \gamma_0 y =(1-\lambda_0)x+\lambda_0( x + t_0 \gamma_0 y).$$
Considering norm on the both sides and applying triangle inequality, we obtain
\begin{align}\label{convexity of norm}
\| x + t_1 \gamma_0 y \| < \| x \|.
\end{align}
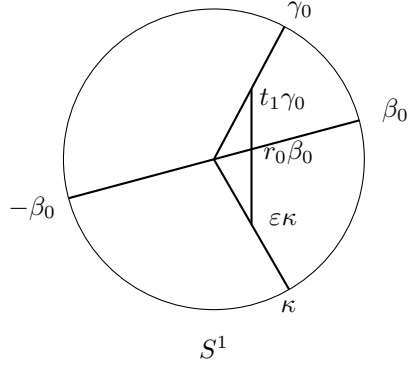
\begin{figure}[h]
\begin{center}
\begin{tikzpicture}
\draw[black] (0,0) circle[radius=2];
\draw[thick] (0:0) -- ++(15:2);
\draw (15:2.5) node{$\beta_0$};
\draw[thick] (0:0) -- ++(195:2);
\draw (195:2.50) node{$-\beta_0$};
\draw[thick] (0:0) -- ++(62:2);
\draw (60: 2.3) node{$\gamma_0$};
\draw[thick] (0:0) -- ++(-60:2);
\draw (-90: 2.5) node{$S^1$};
\draw[thick] (-60:1) -- ++(90:1.8);
\draw (0.92, 0.8) node{$t_1\gamma_0$};
\draw (5:1) node{$r_0\beta_0$};
\draw (0.92, -0.8) node{$\varepsilon \kappa$};
\draw (0.99, -1.95) node{$\kappa$};
\end{tikzpicture}
\end{center}
\caption{Positive and negative part along a half-circle.}
\label{Positive and negative part along a half circle} 
\end{figure}

Let $ \kappa \in U_{-\beta_0}\setminus \{\pm \beta_0\} $. Let $ \varepsilon\in (0,1) $ be arbitrary. We now consider two subsets $\mathsf{A}$ and $\mathsf{B}$ of $\mathbb{C}$, defined as:
$$ \mathsf{A}:=\{ \mu (\varepsilon \kappa) + (1-\mu) (t_1 \gamma_0) :~ \mu \in [0,1] \},\qquad \mathsf{B}:= \{ r \beta_0 :~r\in [-1,1] \}.$$
It follows from the definition (Figure \ref{Positive and negative part along a half circle}) of $\mathsf{A}$ and $\mathsf{B}$ that there exist some $\mu_0\in (0,1]$ and $r_0\in [-1,1]$ such that $$\mathsf{A}\cap \mathsf{B}= \{\mu_0 (\varepsilon \kappa) + (1-\mu_0) (t_1 \gamma_0)\} = \{r_0 \beta_0\}.$$ Now,
\begin{align*}
\| x + r_0 \beta_0 y \| & = \| x + \left(\mu_0 (\varepsilon \kappa) + (1-\mu_0) (t_1 \gamma_0)\right)y \|\\
& = \| \mu_0 x + (1-\mu_0) x + \left(\mu_0 (\varepsilon \kappa) + (1-\mu_0) (t_1 \gamma_0)\right)y \|\\
& \leq \| \mu_0 x + \mu_0 (\varepsilon \kappa) y \| + \| (1-\mu_0) x + (1-\mu_0) (t_1 \gamma_0)y \|\\
& =  \mu_0  \| x +  (\varepsilon \kappa) y \| +   (1- \mu_0) \| x + t_1 \gamma_0 y \|\\
& < \mu_0 \| x + (\varepsilon \kappa) y \| + (1-\mu_0)\| x \|;~~\mathrm{(using~\ref{convexity of norm})}.
\end{align*}
Therefore,
\begin{align}\label{Obtaining inequality}
\| x + r_0 \beta_0 y \|-(1-\mu_0)\| x \| < \mu_0 \| x + (\varepsilon \kappa) y \|.
\end{align}
\noindent By our assumption $x\perp_{\beta_0} y $. Therefore, (\ref{Obtaining inequality}) produces 
$$ \| x \| < \| x + \varepsilon \kappa y \|.$$ 
This means that $ \| x + t \kappa y \| \geq \| x \| ,$ for all $ t \geq 0 $. Therefore, by $(ii)$ of Proposition \ref{basic preliminaries}, $ y\in (x)_\kappa^+=(x)_{-\kappa}^- $ for all $ \kappa \in -U_{\beta_0}.$ Thus, $y \in (x)_{\gamma}^-$ for all $ \gamma\in U_{\beta_0} $.
 
\end{proof}

Now, we prove the promised theorem.
\begin{theorem}\label{Directions of orthogonality}
Let $ \mathbb{X} $ be a complex Banach space and let $ x , y \in \mathbb{X} $ be non-zero with $x\not\perp_B y.$ Then $ \{ \gamma \in S^1 :~ x\perp_\gamma y \} $ is the union of two diametrically opposite closed arcs of the unit circle $ S^1 $.
\end{theorem}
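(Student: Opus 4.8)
The plan is to combine Theorem \ref{existence of direction}, which guarantees at least one direction $\beta_0$ with $x\perp_{\beta_0}y$, with Lemma \ref{y belongs to negative of all direction}, which controls the behaviour of the positive/negative parts along the two half-circles emanating from $\beta_0$. First I would fix such a $\beta_0$ and consider the two half-circles $U_{\beta_0}$ and $U_{-\beta_0}=-U_{\beta_0}$, whose intersection is exactly $\{\beta_0,-\beta_0\}$ and whose union is $S^1$. The key dichotomy, from Proposition \ref{basic preliminaries}(iii), is that $x\perp_\gamma y$ holds precisely when $y\in(x)_\gamma^+\cap(x)_\gamma^-$; so to understand the orthogonality set I need to track, for each $\gamma$, whether $y$ lies in the positive part, the negative part, or both.

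Next I would run the following case analysis on $U_{\beta_0}$. If $x\perp_\gamma y$ for every $\gamma\in U_{\beta_0}$, then (using $-U_{\beta_0}=U_{-\beta_0}$ and Proposition \ref{basic preliminaries}(iii), which gives $x\perp_\gamma y\iff x\perp_{-\gamma}y$) we get $x\perp_\gamma y$ for all $\gamma\in S^1$, contradicting $x\not\perp_B y$ via the characterization $x\perp_B y\iff x\perp_\gamma y\ \forall\gamma\in S^1$. Hence there is some $\gamma_0\in U_{\beta_0}$ with $y\notin(x)_{\gamma_0}^+\cap(x)_{\gamma_0}^-$; by Proposition \ref{basic preliminaries}(i), $y$ lies in exactly one of the two parts at $\gamma_0$. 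Lemma \ref{y belongs to negative of all direction} then forces $y$ to lie in that \emph{same} part (say $(x)_\gamma^-$, after possibly relabelling $\beta_0\mapsto-\beta_0$) for \emph{every} $\gamma\in U_{\beta_0}$; applying $(ii)$ of Proposition \ref{basic preliminaries} this says $y\in(x)_\gamma^+$ for every $\gamma\in U_{-\beta_0}$. Consequently, on $U_{\beta_0}$ the element $y$ is always in the negative part, so $x\perp_\gamma y$ there iff additionally $y\in(x)_\gamma^+$; and symmetrically on $U_{-\beta_0}$, $x\perp_\gamma y$ iff $y\in(x)_\gamma^-$.

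I would then pin down the shape of the set on one half-circle, say $U_{\beta_0}$, where the orthogonality set equals $\{\gamma\in U_{\beta_0}:y\in(x)_\gamma^+\}$. Closedness of this set in $U_{\beta_0}$ is a limiting argument identical to the one used for $W_1,W_2$ in the proof of Theorem \ref{existence of direction}: if $\beta_n\to\gamma$ with $y\in(x)_{\beta_n}^+$, then $\|x+t\gamma y\|=\lim_n\|x+t\beta_n y\|\ge\|x\|$ for all $t\ge0$. Connectedness (i.e.\ that it is an \emph{arc}, not a disjoint union of arcs) is the crux: I claim $\{\gamma\in U_{\beta_0}:y\in(x)_\gamma^+\}$ is ``order-convex'' in the arc $U_{\beta_0}$, meaning if it contains two directions it contains every direction between them. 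This is essentially the content of Lemma \ref{y belongs to negative of all direction} read the other way: if $y\in(x)_{\gamma_1}^+$ and $y\in(x)_{\gamma_2}^+$ for $\gamma_1,\gamma_2\in U_{\beta_0}$ but $y\notin(x)_{\gamma'}^+$ for some $\gamma'$ strictly between them, then applying the lemma on the sub-half-circle determined by $\gamma'$ (after checking $\gamma_1$ or $\gamma_2$ lies in it) propagates the failure and contradicts the other endpoint. I would carry this out by applying the lemma with $\beta_0$ replaced by an appropriate endpoint so that $\gamma'$ plays the role of the bad direction $\gamma_0$.

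Assembling: on $U_{\beta_0}$ the orthogonality set is a closed arc $I$ (possibly a single point, possibly empty — but it contains $\beta_0$ since $x\perp_{\beta_0}y$, so nonempty), and by the symmetry $x\perp_\gamma y\iff x\perp_{-\gamma}y$ the full set $\{\gamma\in S^1:x\perp_\gamma y\}=I\cup(-I)$, the union of two diametrically opposite closed arcs. The main obstacle I anticipate is the connectedness/order-convexity step: one must set up the half-circle in Lemma \ref{y belongs to negative of all direction} with its base point chosen so that the failing direction sits strictly inside and one of the two good directions also sits inside, and then be careful that the lemma's hypothesis ``$x\perp_{\beta_0}y$ for some $\beta_0$'' is met by that sub-configuration — this bookkeeping with arguments in $[0,2\pi)$ and the definition of $U_\beta$ is where the proof could get delicate. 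Everything else (closedness, the global contradiction, the doubling via $\gamma\leftrightarrow-\gamma$) is routine given the earlier results.
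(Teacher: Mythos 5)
Your skeleton (existence of a direction $\beta_0$ with $x\perp_{\beta_0}y$, using Lemma \ref{y belongs to negative of all direction} to force $y\in(x)_\gamma^-$ for all $\gamma\in U_{\beta_0}$ and $y\in(x)_\gamma^+$ on the opposite half-circle, closedness by passing to limits, and doubling via $\gamma\leftrightarrow-\gamma$) agrees with the paper up to the crucial step, but the connectedness step rests on a false claim. The set $I:=\{\gamma\in U_{\beta_0}:\ x\perp_\gamma y\}=\{\gamma\in U_{\beta_0}:\ y\in(x)_\gamma^+\}$ is in general \emph{not} a single closed arc and is not order-convex in $U_{\beta_0}$: it contains both endpoints $\beta_0$ and $-\beta_0$ (since $x\perp_{\beta_0}y$ implies $x\perp_{-\beta_0}y$), yet it omits $\gamma_0$, which lies strictly between them. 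So if your order-convexity claim were provable it would force $I=U_{\beta_0}$, contradicting $x\not\perp_{\gamma_0}y$. The proposed mechanism cannot be repaired, because Lemma \ref{y belongs to negative of all direction} propagates \emph{membership} of $y$ in the negative (respectively positive) part along a half-circle based at a direction of orthogonality; it does not propagate \emph{failure} of membership in the positive part. From $y\notin(x)_{\gamma'}^+$ you only conclude $y\in(x)_\gamma^-$ throughout the half-circle, which is perfectly compatible with $y\in(x)_\gamma^+$ holding again near both endpoints. Consequently the final assembly $\mathsf{S}=I\cup(-I)$ with $I$ a single arc inside a half-circle misdescribes the structure: the two diametrically opposite arcs of $\mathsf{S}$ straddle $\beta_0$ and $-\beta_0$ and are not contained in either half-circle.

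The paper's proof handles exactly this point by splitting $U_{\beta_0}$ at the bad direction $\gamma_0$: it sets $\mathsf{C}:=\{\beta:\ x\perp_\beta y,\ \mathsf{arg}\,\beta\in[0,\mathsf{arg}\,\gamma_0]\}$ and $\mathsf{D}:=\{\beta:\ x\perp_\beta y,\ \mathsf{arg}\,\beta\in[\mathsf{arg}\,\gamma_0,\pi]\}$ and proves each is a closed arc anchored at $\beta_0$, respectively $-\beta_0$. The convexity of $\mathsf{C}$ uses the lemma \emph{twice, from opposite base points}: for $\beta_1,\beta_2\in\mathsf{C}$ and $\beta_3$ a normalized convex combination between them, apply the lemma on $U_{\beta_1}$ (which contains $\gamma_0$ and $\beta_3$, with $y\in(x)_{\gamma_0}^-\setminus(x)_{\gamma_0}^+$) to get $y\in(x)_{\beta_3}^-$, and then on $U_{-\beta_2}$ (which contains $-\gamma_0$ and $\beta_3$, with $y\in(x)_{-\gamma_0}^+\setminus(x)_{-\gamma_0}^-$) to get $y\in(x)_{\beta_3}^+$; Proposition \ref{basic preliminaries}(iii) then yields $\beta_3\in\mathsf{C}$. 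Finally the two arcs are glued across $\beta_0$ by taking $\mathsf{E}:=\mathsf{C}\cup(-\mathsf{D})$, so that $\mathsf{S}=\mathsf{E}\cup(-\mathsf{E})$. Your proposal is missing both the split at $\gamma_0$ and the second application of the lemma from the opposite half-circle; these are precisely what make the connectedness argument work.
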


\begin{proof}
Let $\mathsf{S}$ be a subset of $S^1$, defined by $$\mathsf{S}:=\{ \gamma \in S^1 :~ x\perp_\gamma y \} .$$
It is sufficient to show that $\mathsf{S} = \mathsf{E}\cup (-\mathsf{E}),$ where $ \mathsf{E} $ is a closed and connected subset of $ S^1 $. It follows from Theorem \ref{existence of direction} that $ x\perp_{\beta_0} y $ (hence $x\perp_{-\beta_0}y$), for some $ \beta_0 \in S^1 $. If $\mathsf{S} =\{\beta_0,-\beta_0\}$, then we are done. So we assume that $\{\beta_0,-\beta_0\}\subsetneq \mathsf{S} .$ Without loss of generality, let $  \beta_0 := 1 $. We now complete the proof in the following three steps:\\

\noindent Step I: Since $ x \not\perp_B y $, there exists $ \gamma_0 \in U_{\beta_0} $ such that $ x \not\perp_{\gamma_0} y $. Without loss of generality, let $ y\in (x)_{\gamma_0}^-\setminus (x)_{\gamma_0}^+ $. Next, we consider two subsets $\mathsf{C}$ and $\mathsf{D}$ of $S^1$, defined as: 
\begin{align*}
&\mathsf{C} := \{ \beta\in S^1 :~ x\perp_\beta y,~ \mathsf{arg}~\beta\in [0,~ \mathsf{arg}~\gamma_0]\},\\
&\mathsf{D} := \{ \beta\in S^1 :~ x\perp_\beta y,~ \mathsf{arg}~\beta\in [ \mathsf{arg}~\gamma_0, \pi]\}.
\end{align*}

We show below that $\mathsf{C}$ and $\mathsf{D}$ are non-empty and closed (hence compact) subsets of $S^1.$ It is trivial to see that $\mathsf{C}$ and $\mathsf{D}$ are non-empty, as $\beta_0\in \mathsf{C}$ and $-\beta_0\in \mathsf{D}$. Let $(\beta_n)$ be a sequence in $\mathsf{C}$ with $\beta_n\to \alpha.$ Then 
\begin{align*}
\|x+t\alpha y\|=\underset{n}{\lim}\|x+t\beta_n y\| \geq \|x\|,~~\mathrm{for~all}~~ t\in \mathbb{R}.
\end{align*}

\begin{figure}[h]
\begin{center}
\begin{tikzpicture}

\draw[black][very thick] (0,0) (0:2cm) arc (0:56:2cm);
\draw[black][very thick] (0,0) (310:2cm) arc (310:360:2cm);
\draw[black][very thick] (0,0) (180:2cm) arc (180:236:2cm);
\draw[black][very thick] (0,0) (490:2cm) arc (490:540:2cm);
\draw[black] (0,0) (56:2cm) arc (56:130:2cm);
\draw[black] (0,0) (236:2cm) arc (236:310:2cm);
\draw[thick] (0:0) -- ++(0:2);
\draw (0: 2.3) node{$\beta_0$};

\draw[thick] (0:0) -- ++(14:2);
\draw (14: 2.3) node{$\beta_1$};

\draw[thick] (0:0) -- ++(28:2);
\draw (28: 2.3) node{$\beta_3$};

\draw[thick] (0:0) -- ++(42:2);
\draw (42: 2.3) node{$\beta_2$};

\draw[thick] (0:0) -- ++(56:2);
\draw (56: 2.3) node{$\alpha_0$};

\draw (-90: 3) node{$S^1$};

\draw[thick] (0:0) -- ++(70:2);
\draw (70: 2.3) node{$\gamma_0$};

\draw[thick] (0:0) -- ++(250:2);
\draw (250: 2.3) node{$-\gamma_0$};

\draw[thick] (0:0) -- ++(130:2);
\draw (130: 2.3) node{$\kappa_0$};

\draw[thick] (0:0) -- ++(236:2);

\draw[thick] (0:0) -- ++(310:2);

\draw[thick] (0:0) -- ++(180:2);
\draw (-2.50,0.0) node{$-\beta_0$};

\draw (30:3) node{${\mathsf{C}}$};
\draw (160:3) node{${\mathsf{D}}$};
\draw (210:3) node{$-{\mathsf{C}}$};
\draw (340:3) node{$-{\mathsf{D}}$};
\end{tikzpicture}
\end{center}
\caption{Direction of orthogonality.} 
\label{Direction of orthogonality}
\end{figure}
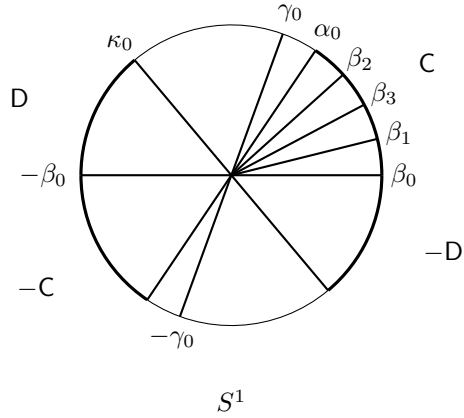
\noindent Therefore, $x\perp_\alpha y$. Observe that $\mathsf{arg}~{\gamma_0}\in (0, \pi).$ For every $\theta\in \mathbb{R}$, define $e^{i\theta}:=(\cos\theta, \sin\theta)\in \mathbb{R}^2.$ Since the argument function $\mu\mapsto \mathsf{arg}~\mu$ is continuous on $\{e^{i\theta}: 0\leq \theta \leq \mathsf{arg}~\gamma_0\}$, we have $\mathsf{arg}~\beta_n \to \mathsf{arg}~{\alpha}$. As a result, $0 \leq \mathsf{arg}~\alpha \leq \mathsf{arg}~\gamma_0$. Thus, $\alpha\in \mathsf{C}$ and $ \mathsf{C} $ is closed, as expected. A similar argument shows that $\mathsf{D}$ is also closed.\\

\noindent Step II: We show here that $\mathsf{C}$ and $\mathsf{D}$ are connected subsets of $S^1.$ If $\mathsf{C}:=\{\beta_0\}$, then we have nothing more to show. Assume that there are $\beta_1, \beta_2\in \mathsf{C}$ with $\mathsf{arg}~\beta_2 > \mathsf{arg}~\beta_1.$ Let $t_0\in (0,1)$ be arbitrary and let $$\beta_3 := \dfrac{t_0\beta_2+(1-t_0)\beta_1}{|t_0\beta_2+(1-t_0)\beta_1|}.$$
Observe that $\gamma_0, \beta_3 \in U_{\beta_1}$ (Figure \ref{Direction of orthogonality}) and $ y\in (x)_{\gamma_0}^-\setminus (x)_{\gamma_0}^+ $. Thus, it follows from Lemma \ref{y belongs to negative of all direction} that $y\in (x)_{\beta_3}^-.$ Also, $-\gamma_0, \beta_3 \in U_{-\beta_2}$ (Figure \ref{Direction of orthogonality}) and $ y\in (x)_{-\gamma_0}^+\setminus (x)_{-\gamma_0}^- $. Therefore, it follows from Lemma \ref{y belongs to negative of all direction} that $y\in (x)_{\beta_3}^+.$ Now, applying Proposition \ref{basic preliminaries} (iii), we have that $x\perp_{\beta_3} y.$ Since $\mathsf{arg}~{\beta_3} < \mathsf{arg}~{\beta_2} < \mathsf{arg}~{\gamma_0}$, we have $\beta_3\in \mathsf{C}$. Therefore, $\mathsf{C}$ is connected. A similar argument shows that $\mathsf{D}$ is connected.

In other words, we can find $\alpha_0,\kappa_0\in S^1$ such that $\mathsf{C}$ is the closed arc between $\beta_0$ and $\alpha_0$ and $\mathsf{D}$ is the closed arc between $\kappa_0$ and $-\beta_0$ (Figure \ref{Direction of orthogonality}).\\

\noindent Step III: Finally, observe that 
$$\mathsf{S} = \mathsf{C}\cup (-\mathsf{C})\cup \mathsf{D} \cup (-\mathsf{D}).$$
Clearly, $\mathsf{C}\cap (-\mathsf{D})=\{\beta_0\}$. As a result, $\mathsf{C}\cup (-\mathsf{D})$ is a closed and connected subset of $S^1$. Now, put $\mathsf{E}:=\mathsf{C}\cup (-\mathsf{D}).$ Therefore, $\mathsf{S}= \mathsf{E}\cup (-\mathsf{E})$, where $\mathsf{E}$ is a closed and connected subset of $S^1$.
 
\end{proof}

\section{Directional orthogonality and smoothness}

This section plans to characterize the smoothness of a non-zero element $x$ in a complex Banach space $\mathbb{X},$ using the directional orthogonality. To fulfill the desired goal, we need to characterize the directional orthogonality in terms of linear functionals on $\mathbb{X}.$

For $x\in \mathbb{X}$, put
\begin{align*}
\mathsf{J}(x) := \{x^*\in S_{\mathbb{X}^*} :~ |x^*(x)| = \|x\| \}.
\end{align*}
Note that $\mathsf{J}(x)$ is different from $\mathbb{J}(x).$
For $\mu\in S^1$ define
\begin{align}\label{norming functionals for a direction}
\mathsf{J}_\mu(x) := \{x^*\in S_{\mathbb{X}^*} :~ x^*(x) = \mu\|x\| \}.
\end{align}
It is well-known from the Banach-Alaoglu Theorem that the closed unit ball $B_{\mathbb{X}^*}$ of $\mathbb{X}^*$ is weak$^*$ compact. Also, it is easy to see that  $\mathsf{J}_\mu(x)$ is non-empty, convex and weak$^*$ compact.

The following theorem provides a necessary and sufficient condition for orthogonality of two vectors $x,y$ in a particular direction. 

\begin{theorem}\label{functional Characterization}
Let $\mathbb{X}$ be a complex Banach space. Let $x,y\in \mathbb{X}$ be non-zero vectors and let $\mu\in S^1.$ Then $x\perp_\mu y$ if and only if there exists $u^*\in \mathsf{J}_\mu(x)$ such that $\mathsf{Re}~u^*(y)=0$.
\end{theorem}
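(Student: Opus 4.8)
The plan is to mimic the classical James-type characterization of Birkhoff--James orthogonality via norming functionals, but now tracking a direction $\mu \in S^1$ and replacing the condition $u^*(y)=0$ by $\mathsf{Re}~u^*(y)=0$. The ``easy'' direction is the converse: given $u^* \in \mathsf{J}_\mu(x)$ with $\mathsf{Re}~u^*(y)=0$, I would simply estimate, for any $t \in \mathbb{R}$,
\[
\|x + t\mu y\| \geq |u^*(x + t\mu y)| \geq \mathsf{Re}\big(\overline{\mu}\,u^*(x+t\mu y)\big) = \mathsf{Re}\big(\overline{\mu}\,u^*(x)\big) + t\,\mathsf{Re}~u^*(y) = \|x\|,
\]
using $u^*(x) = \mu\|x\|$ so that $\overline{\mu}\,u^*(x) = \|x\|$ is real, and $\mathsf{Re}~u^*(y)=0$. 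Hence $x \perp_\mu y$. (A small technical point: one must be slightly careful that $\overline{\mu}\,u^*(x+t\mu y) = \|x\| + t u^*(y)$ and take real parts; the imaginary part of $u^*(y)$ plays no role, which is exactly why only $\mathsf{Re}~u^*(y)$ appears.)

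For the forward direction, suppose $x \perp_\mu y$, i.e. $\|x + t\mu y\| \geq \|x\|$ for all $t \in \mathbb{R}$. The idea is to run a real-linear Hahn--Banach / separation argument on the real Banach space $\mathbb{X}_{\mathbb{R}}$ underlying $\mathbb{X}$. Concretely, I would first observe that $x \perp_\mu y$ says precisely that $x$ is Birkhoff--James orthogonal \emph{in the real sense} to the vector $\mu y$ within the real span considerations, or more directly: the real-linear functional we seek should norm $x$ (with value $\mu\|x\|$ under the complex pairing) and kill the real direction $\mu y$. So I would consider the real subspace $Y := \{\, s x + t (\mu y) : s,t \in \mathbb{R} \,\}$ (taking care of the degenerate case where $x, \mu y$ are real-linearly dependent separately, which is handled by the second part of Theorem~\ref{existence of direction}), define on $Y$ the real-linear functional $f_0(sx + t\mu y) := s\|x\|$, and check that $\|f_0\| = 1$ as a functional on $(Y, \|\cdot\|)$ — this is exactly where the hypothesis $\|sx + t\mu y\| \geq |s|\,\|x\| = |f_0(sx+t\mu y)|$ (which follows from $x\perp_\mu y$ by scaling) gets used. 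Then extend $f_0$ to a norm-one real-linear functional $f$ on $\mathbb{X}_{\mathbb{R}}$ by the (real) Hahn--Banach theorem, and finally produce a complex-linear functional $u^*$ by the standard formula $u^*(z) := f(z) - i f(iz)$, which satisfies $\|u^*\| = \|f\| = 1$.

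It then remains to verify that this $u^*$ does the job, namely $u^*(x) = \mu\|x\|$ and $\mathsf{Re}~u^*(y)=0$. For the second, note $\mathsf{Re}~u^*(\mu y) = f(\mu y) = f_0(\mu y) = 0$, and since $\mathsf{Re}~u^*(\mu y) = \mathsf{Re}\big(\mu\, u^*(y)\big)$, one has to massage this together with the value on $i\mu y$ — actually it is cleaner to apply the whole argument with $\mu y$ in place of a generic vector and note that $x \perp_\mu y$ is symmetric under $\mu \mapsto -\mu$ by Proposition~\ref{basic preliminaries}(iii), but the crux is: we also know $\|x + t\mu y\|\ge\|x\|$ forces, after the extension, $\mathsf{Re}~u^*(y) = 0$ rather than merely $u^*(\mu y)$ being real-null in one coordinate; I would nail this by additionally arranging $f$ to annihilate $i\mu y$ as well — but that cannot be imposed for free, so instead the honest route is to use that $\mathsf{Re}~u^*(x + t\mu y) \le |u^*(x+t\mu y)| \le \|x+t\mu y\|$ already gives a lower bound, and combine with $f_0$ being norming to squeeze $\mathsf{Re}\,\overline{\mu}u^*(x) = \|x\|$, hence $u^*(x)=\mu\|x\|$ (since $\|u^*\|=1=\|x\|$ forces equality in Cauchy--Schwarz-type bound and the phase is pinned by $\overline\mu u^*(x)$ being real and equal to $\|x\|$), and then $\mathsf{Re}~u^*(y)=0$ drops out from differentiating/estimating $\|x+t\mu y\|\ge\|x\|$ at $t=0$ via the functional. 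The main obstacle I anticipate is precisely this bookkeeping of \emph{real} versus \emph{complex} linearity — ensuring the Hahn--Banach extension is taken in the real category and that the passage $f \mapsto u^*$ correctly converts ``$f$ kills the real line $\mathbb{R}\mu y$'' into ``$\mathsf{Re}~u^*(y)=0$'' without accidentally claiming the stronger $u^*(y)=0$, which would be false in general (that stronger statement corresponds to full $x\perp_B y$).
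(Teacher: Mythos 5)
Your sufficiency argument is fine and is essentially the paper's (the paper squares $|u^*(x+t\mu y)|$ rather than taking $\mathsf{Re}\,\overline{\mu}\,u^*(x+t\mu y)$, but the content is identical). For necessity you take a genuinely different route: the paper argues by contradiction, using that $\mathsf{J}_\mu(x)$ is convex and weak$^*$ compact, so that $W=\{\mathsf{Re}\,p^*(y):p^*\in\mathsf{J}_\mu(x)\}$ is a compact interval bounded away from $0$, and then produces an explicit $\varepsilon_0>0$ with $\|x-\varepsilon_0\mu y\|<1$, contradicting $x\perp_\mu y$. Your plan --- view $x\perp_\mu y$ as real Birkhoff--James orthogonality of $x$ to $\mu y$, define $f_0(sx+t\mu y):=s\|x\|$ on the real span $Y$ (the degenerate case $\mu y\in\mathbb{R}x$ cannot occur when $x\perp_\mu y$ and $y\neq 0$, so real-linear independence and well-definedness are automatic), check $\|f_0\|=1$ from $\|sx+t\mu y\|\ge|s|\,\|x\|$, extend by real Hahn--Banach, and complexify via $u^*(z):=f(z)-if(iz)$ with $\|u^*\|=\|f\|$ --- is sound and arguably more elementary: it gives a direct construction in place of the paper's quantitative compactness estimate.

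The place where your write-up goes astray is the final verification, and as stated it is incorrect. With your $f_0$, the complexification satisfies $\mathsf{Re}\,u^*(x)=f(x)=\|x\|$, which together with $\|u^*\|=1$ forces $u^*(x)=\|x\|$; so $u^*\in\mathsf{J}_1(x)$, not $\mathsf{J}_\mu(x)$ (the ``phase pinning'' you invoke pins the phase to $1$, not to $\mu$). Likewise, what you actually obtain for $y$ is $\mathsf{Re}\big(\mu\,u^*(y)\big)=\mathsf{Re}\,u^*(\mu y)=f(\mu y)=0$, not $\mathsf{Re}\,u^*(y)=0$. There is no need to ``arrange $f$ to annihilate $i\mu y$'' (indeed you cannot), nor to differentiate or re-estimate the norm: the one-line repair is to rotate the functional, i.e.\ set $w^*:=\mu u^*$. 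Then $\|w^*\|=1$, $w^*(x)=\mu\|x\|$, so $w^*\in\mathsf{J}_\mu(x)$, and $\mathsf{Re}\,w^*(y)=\mathsf{Re}\big(\mu\,u^*(y)\big)=0$, which is exactly the asserted conclusion. With this substitution your argument is complete and correct.
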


\begin{proof}
Note that the directional orthogonality is real homogeneous. Therefore, without loss of generality, we may and do assume that $\|x\|=\|y\| = 1$. We first prove the necessity part. Clearly, $x\neq r\mu y$, for any $r\in \mathbb{R}$. If possible, suppose $\mathsf{Re}~x^*(y) \neq 0$, for every $x^*\in \mathsf{J}_\mu(x)$. We will proceed in three steps:\\ 

Step I: Define the set
\begin{align*}
W := \{ \mathsf{Re}~p^*(y) :~ p^*\in \mathsf{J}_\mu(x)\}\subseteq \mathbb{R}.
\end{align*}
The set $\mathsf{J}_\mu(x)$ being clearly convex and weak$^*$ compact (hence connected), $W$ must be a compact interval in $\mathbb{R}$. Clearly, $0 \notin W.$ Therefore, every member of $W$ is of the same sign. Without loss of generality, we assume that every member of $W$ is positive. Due to the compactness of $W$, we can find some $\lambda \in (0,1)$ such that $w>\lambda$, for all $w\in W$. In other words, 
\begin{align}\label{condition on Jmu(x)}
\mathsf{Re}~p^*(y) > \lambda,~\mathrm{for~all~} p^*\in \mathsf{J}_\mu(x).
\end{align}

Step II: Define
\begin{align*}
G := \left\{x^*\in S_{\mathbb{X}^*} :~ \mathsf{Re}~\mu\overline{x^*(x)}x^*(y) \leq \dfrac{\lambda}{2}\right\}.
\end{align*}
Clearly, $|x^*(x)| \leq 1$ for every $x^*\in G$. We will show that 
$$\sup\{|x^*(x)| :~ x^*\in G\} < 1-2\varepsilon,$$
for some $\varepsilon \in (0,\frac{1}{2})$. If possible, suppose $\sup\{|x^*(x)| :~ x^*\in G\}=1.$ We consider two cases.\\

\noindent Case I: Suppose there exists $x_0^*\in G$ such that $|x_0^*(x)|=1$. Then $x_0^*\in \mathsf{J}(x)$. Observe that $\mu\overline{x_0^*(x)}x_0^*\in \mathsf{J}_\mu(x)$. Therefore, $\mathsf{Re}~\mu\overline{x_0^*(x)}x_0^*(y)\in W$. In particular, it follows from (\ref{condition on Jmu(x)}) that $\mathsf{Re}~\mu\overline{x_0^*(x)}x_0^*(y) > \lambda $, which is a contradiction.\\

\noindent Case II: Suppose there exists a sequence $(x_n^*)$ in $G$ such that $|x_n^*(x)|\to 1$. Find a subnet $(x_{n_\tau}^*)_{\tau\in \mathscr{T}}$ of the sequence $(x^*_n)$ which converges to
an $x^*_0\in \mathbb{X}^*$, say. Then $|x_0^*(x)|=1$ and therefore, $x_0^*\in S_{\mathbb{X}^*}$. Consequently, $x_0^*\in \mathsf{J}(x)$ and $\mu\overline{x_0^*(x)}x_0^*\in \mathsf{J}_\mu(x)$. Again, it follows from (\ref{condition on Jmu(x)}) that $\mathsf{Re}~\mu\overline{x_0^*(x)}x_0^*(y) > \lambda $. However, this is a contradiction, since $\mathsf{Re}~\mu\overline{x_n^*(x)}x_n^*(y) \leq \dfrac{\lambda}{2}$, for every $n\in \mathbb{N}$. Therefore,  $\sup\{|x^*(x)| :~ x^*\in G\} < 1-2\varepsilon$, for some $\varepsilon \in (0,\frac{1}{2})$, as desired.\\

Step III: In this step, we find a real number $\varepsilon_0>0$ such that $\|x-\varepsilon_0\mu y\| < 1,$ which will contradict to $x\perp_\mu y$. Choose $0< \varepsilon_0 < \min\{\frac{\lambda}{2}, \varepsilon\}$. Now, for any $x^*\in G$,
\begin{align*}
|x^*(x-\varepsilon_0\mu y)|\leq |x^*(x)|+\varepsilon_0|\mu||x^*(y)| < 1-2\varepsilon + \varepsilon_0 < 1-\varepsilon.
\end{align*} 
If $x^*\in S_{\mathbb{X}^*}\setminus G$, then $\mathsf{Re}~\mu\overline{x^*(x)}x^*(y) > \dfrac{\lambda}{2}$, and so
\begin{align*}
|x^*(x-\varepsilon_0\mu y)|^2 & = x^*(x-\varepsilon_0\mu y)\overline{x^*(x-\varepsilon_0\mu y)}\\
& = |x^*(x)|^2+\varepsilon_0^2|x^*(y)|^2 - 2\varepsilon_0\mathsf{Re}~\mu\overline{x^*(x)}x^*(y)\\
& < 1 + \varepsilon_0^2 - \lambda\varepsilon_0.
\end{align*}
As $\varepsilon_0 < \lambda $, choose, $0<\delta_0 < \varepsilon$ such that $(1-\delta_0)^2> 1 + \varepsilon_0^2 - \lambda\varepsilon_0$. As a consequence,
\begin{align*}
|x^*(x-\varepsilon_0\mu y)| < 1-\delta_0, ~~\mathrm{for~all}~x^*\in S_{\mathbb{X}^*}.
\end{align*}
Thus, 
$$\|x-\varepsilon_0\mu y\|=\sup\{|x^*(x-\varepsilon_0\mu y)|:~ x^*\in S_{\mathbb{X}^*}\}\leq 1-\delta_0 < 1=\|x\|.$$ Therefore, $x$ is not $\mu$ orthogonal to $y$, a contradiction.

To prove the sufficiency part, we proceed as follows:
\begin{align*}
\|x+t\mu y\|^2 & \geq |u^*(x+t\mu y)|^2\\
& = |\mu +t \mu u^*(y)|^2\\
& = |1 +t u^*(y)|^2\\
& = (1 +t u^*(y))(1+t\overline{u^*(y)})\\
& = 1+2t\mathsf{Re}~u^*(y)+t^2|u^*(y)|^2\\
& \geq 1,
\end{align*}
for all $t\in \mathbb{R}$.
 
\end{proof}

After formulating the directional orthogonality of two vectors in terms of linear functionals, we now turn our attention towards characterizing the smoothness of a non-zero element $x$ in a complex Banach space $\mathbb{X}.$ We introduce the following definition to serve our purpose. 

\begin{definition}(\emph{Orthogonality pair})\label{Orthogonality pair}
Let $\mathbb{X}$ be a complex Banach space and let $(x,y)\in \mathbb{X}\times \mathbb{X}$. An ordered pair $(\mu,x^*)\in S^1\times S_{\mathbb{X}^*}$ is said to be an \emph{orthogonality pair} for $(x,y)$ if 
\begin{align*}
x^*(x) = \mu\|x\|~~\mathrm{and}~~\mathsf{Re}~x^*(y)=0.
\end{align*}
We denote the collection of all orthogonality pairs for $(x,y)$ by $\mathcal{O}(x,y)$.
\end{definition}

\begin{remark}
Let $(x,y) \in \mathbb{X}\times\mathbb{X}.$ Then  Theorem \ref{existence of direction} ensures the existence of a uni-modular constant $\mu$, such that $x\perp_\mu y.$ Now, applying Theorem \ref{functional Characterization} we can find a unit vector $x^*\in S_{\mathbb{X}^*}$ such that $$x^*(x)=\mu\|x\|~\mathrm{and}~\mathsf{Re}~x^*(y)=0.$$ This shows that the ordered pair $(\mu,x^*) \in S^1\times S_{\mathbb{X}^*}$ is a member of $\mathcal{O}(x,y).$ It follows that $\mathcal{O}(x,y)$ is non-empty, for any $(x,y) \in \mathbb{X}\times\mathbb{X}.$ Note that the cardinality of $\mathcal{O}(x,y)$ is always greater than or equals to $2,$ as $(\mu,x^*) \in \mathcal{O}(x,y)$ implies that $(-\mu,-x^*) \in \mathcal{O}(x,y).$
\end{remark}

We now present the promised characterization:

\begin{theorem}\label{smoothness characterization}
Let $\mathbb{X}$ be a complex Banach space and let $x\in \mathbb{X}$ be non-zero. Then $x$ is smooth if and only if for any $y\in \mathbb{X}$ with $x\not\perp_B y$, $\mathcal{O}(x,y)=\{ (\mu, u^*),(-\mu, -u^*)\}$, for some $(\mu,u^*)\in S^1\times S_{\mathbb{X}^*}$.
\end{theorem}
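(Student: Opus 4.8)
The plan is to establish both implications via the functional characterization of directional orthogonality (Theorem \ref{functional Characterization}), which translates membership in $\mathcal{O}(x,y)$ into the existence of norming functionals $u^*\in \mathsf{J}_\mu(x)$ with $\mathsf{Re}~u^*(y)=0$. The crucial bookkeeping device is the relationship between $\mathbb{J}(x)$ and the sets $\mathsf{J}_\mu(x)$: every $x^*\in\mathsf{J}_\mu(x)$ can be written as $x^*=\mu\,\tilde{x}^*$ with $\tilde{x}^*\in\mathbb{J}(x/\|x\|)$, i.e. $\overline{\mu}\,\mathsf{J}_\mu(x)=\mathbb{J}(x)$ (after normalizing $\|x\|=1$, which we may do since both smoothness and the conclusion are scaling-invariant in $x$, and the conclusion is invariant under scaling $y$). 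So if $(\mu,u^*)\in\mathcal{O}(x,y)$, then $w^*:=\overline{\mu}\,u^*\in\mathbb{J}(x)$ and $\mathsf{Re}~\mu\,w^*(y)=0$.

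For the \emph{necessity} direction, assume $x$ is smooth, so $\mathbb{J}(x)=\{w^*\}$ is a singleton. Take any $y$ with $x\not\perp_B y$; then $w^*(y)\neq 0$ (otherwise $x\perp_B y$). Suppose $(\mu,u^*)\in\mathcal{O}(x,y)$. By the observation above, $\overline{\mu}\,u^*\in\mathbb{J}(x)=\{w^*\}$, so $u^*=\mu w^*$, and the condition $\mathsf{Re}~u^*(y)=0$ becomes $\mathsf{Re}~\mu w^*(y)=0$. Writing $w^*(y)=|w^*(y)|e^{i\phi}$ with $|w^*(y)|\neq 0$, this forces $\mu e^{i\phi}$ to be purely imaginary, so $\mu\in\{ie^{-i\phi},-ie^{-i\phi}\}$ — exactly two values, diametrically opposite. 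Setting $\mu_0:=ie^{-i\phi}$ and $u_0^*:=\mu_0 w^*$, we get $\mathcal{O}(x,y)=\{(\mu_0,u_0^*),(-\mu_0,-u_0^*)\}$, as claimed.

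For the \emph{sufficiency} direction, I argue the contrapositive: if $x$ is not smooth, I produce a $y$ with $x\not\perp_B y$ for which $\mathcal{O}(x,y)$ is strictly larger than a diametrically-opposite pair. Non-smoothness gives two distinct $w_1^*,w_2^*\in\mathbb{J}(x)$. The idea is to choose $y$ so that $\mathsf{Re}~w_1^*(y)=\mathsf{Re}~w_2^*(y)=0$ while $w_1^*(y)$ and $w_2^*(y)$ are \emph{not} real multiples of each other (in particular both nonzero and with different arguments modulo $\pi$), so that $x\not\perp_B y$ but both $(i\,w_1^*)$ and $(i\,w_2^*)$ — with their respective unimodular first coordinates $i$ — land in $\mathcal{O}(x,y)$ giving at least four distinct orthogonality pairs. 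Concretely, since $w_1^*\neq w_2^*$ there is $z\in\mathbb{X}$ with $w_1^*(z)\neq w_2^*(z)$; I will pick $y$ in the span of $z$ and a suitable correction vector, or more directly choose $y$ with $w_1^*(y)=i$ and $w_2^*(y)=i\alpha$ for a well-chosen $\alpha\in\mathbb{R}\setminus\{0\}$ (possible because $(w_1^*,w_2^*):\mathbb{X}\to\mathbb{C}^2$ has rank $2$ when $w_1^*,w_2^*$ are linearly independent; the linearly-dependent-but-distinct case $w_2^*=cw_1^*$ with $|c|=1$, $c\neq 1$ is handled separately by taking $y$ with $w_1^*(y)=i$, which gives $w_2^*(y)=ic$, again yielding extra pairs since $ic$ is not a real multiple of $i$). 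One must also verify $x\not\perp_B y$: since the distinct functionals give $w_1^*(y)\neq w_2^*(y)$ and at least one is nonzero and non-real-proportional to the other, no single $w^*\in\mathbb{J}(x)$ can kill $y$ — but here care is needed, since $\mathbb{J}(x)$ may be a large face, not just $\{w_1^*,w_2^*\}$, so I should instead engineer $y$ so that \emph{every} $w^*\in\mathbb{J}(x)$ has $w^*(y)\neq 0$; choosing $y$ close to $x$ itself (say $y = x + \epsilon z$ for tiny $\epsilon$) makes $w^*(y)$ close to $1$ for all $w^*\in\mathbb{J}(x)$, hence nonzero, and a compactness argument on the weak$^*$-compact set $\mathbb{J}(x)$ makes this uniform.

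\textbf{Main obstacle.} The subtle point is the sufficiency direction: it is \emph{not} enough to find $y$ with two "independent-direction" orthogonality pairs coming from $w_1^*,w_2^*$; I must ensure simultaneously that (a) $x\not\perp_B y$ (so the theorem's hypothesis on $y$ applies) and (b) the resulting $\mathcal{O}(x,y)$ genuinely exceeds a single $\pm$-pair. Reconciling these — especially guaranteeing $w^*(y)\neq 0$ for \emph{all} of the potentially large face $\mathbb{J}(x)$ while still keeping the arguments of $w_1^*(y)$ and $w_2^*(y)$ distinct mod $\pi$ — is where the real work lies, and a perturbation-of-$x$ plus weak$^*$-compactness argument, combined with Theorem \ref{functional Characterization} to recover the orthogonality pairs, is the route I expect to take.
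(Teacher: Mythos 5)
Your necessity argument is fine and is essentially the paper's (identifying $\overline{\mu}\,u^*$ with the unique element of $\mathbb{J}(x)$ and noting that $\mathsf{Re}~\mu w^*(y)=0$ with $w^*(y)\neq 0$ leaves exactly two antipodal choices of $\mu$). The gap is in the sufficiency direction, and it is real: your plan requires a vector $y$ satisfying simultaneously $\mathsf{Re}~w_1^*(y)=\mathsf{Re}~w_2^*(y)=0$ (with both values nonzero) \emph{and} $w^*(y)\neq 0$ for every $w^*$ in the possibly large face $\mathbb{J}(x)$, and your proposed fix for the second condition, taking $y=x+\epsilon z$ with $\epsilon$ small, is incompatible with the first: every $w^*\in\mathbb{J}(x)$ satisfies $w^*(x)=\|x\|$, so $\mathsf{Re}~w_i^*(x+\epsilon z)$ is close to $\|x\|$, not $0$. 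So the construction as sketched cannot be carried out, and you yourself flag the reconciliation as unresolved.

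The underlying difficulty is self-inflicted: you are insisting that the two orthogonality pairs produced from $w_1^*\neq w_2^*$ share the \emph{same} first coordinate (both $\mu=1$, say), which is what forces the delicate choice of $y$. The hypothesis is much more flexible than that, because it holds for \emph{every} $y$ with $x\not\perp_B y$ and because $\mathcal{O}(x,y)$ collapses to a single $\pm$-pair in \emph{both} coordinates. This is the paper's route: fix any $y$ with $x\not\perp_B y$ (such $y$ exists trivially, e.g.\ $y=x$), and for $j=1,2$ choose $\alpha_j\in S^1$ with $\mathsf{Re}~\alpha_j w_j^*(y)=0$ — always possible by rotating the complex number $w_j^*(y)$, no condition on $y$ needed. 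Then $(\alpha_1,\alpha_1 w_1^*)$ and $(\alpha_2,\alpha_2 w_2^*)$ both lie in $\mathcal{O}(x,y)$, and if $\mathcal{O}(x,y)=\{(\mu_0,u_0^*),(-\mu_0,-u_0^*)\}$ then in either matching one gets $w_1^*=\overline{\alpha_1}(\alpha_1 w_1^*)=\overline{\mu_0}u_0^*=\overline{\alpha_2}(\alpha_2 w_2^*)=w_2^*$, a contradiction. Replacing your construction by this observation closes the gap and makes the sufficiency proof a few lines.
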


\begin{proof}
Without loss of generality, let $\|x\|=1$. We first prove the necessity part. Consider any $y\in \mathbb{X}$ with $x\not\perp_B y$. If possible, suppose $(\alpha, x^*),(\beta, y^*)\in \mathcal{O}(x,y)$ with $(\beta, y^*) \neq (\alpha, x^*), (-\alpha, -x^*)$, for some $(\alpha,x^*),( \beta,y^*)\in S^1\times S_{\mathbb{X}^*} $. Let $\mathbb{J}(x):=\{v^*\},$ for some $v^*\in S_{\mathbb{X}^*}.$ It is easy to see that $$x^*=\alpha v^*~\mathrm{and}~y^*=\beta v^*.$$ Indeed, if $x^*\neq \alpha v^*$ $(y^*\neq \beta v^*)$, then $\overline{\alpha}x^*$ $(\overline{\beta}y^*)$ is a member of $\mathbb{J}(x)$ with $\overline{\alpha}x^*\neq v^*$ $(\overline{\beta}y^*\neq v^*)$. This contradicts the fact that $x$ is smooth. It follows from our hypothesis that $$\mathsf{Re}~\alpha v^*(y)=\mathsf{Re}~\beta v^*(y)=0.$$ Since $S^1$ is a group under multiplication, there exists $\sigma\in S^1$ such that $\sigma \alpha = \beta.$ We claim that $\sigma \neq \pm 1$. If $\sigma = \pm 1$, then $\beta = \pm \alpha$. If $\beta = \alpha$, then
\begin{align*}
(\beta, y^*) = (\alpha, y^*) = (\alpha, \alpha v^*) = (\alpha, x^*).
\end{align*}
Also, if $\beta = -\alpha$, then
\begin{align*}
(\beta, y^*) = (-\alpha, y^*) = (-\alpha, -\alpha v^*) = (-\alpha, -x^*).
\end{align*}
This is a contradiction to the fact that $(\beta, y^*) \neq (\alpha, x^*), (-\alpha, -x^*)$. Thus, $\sigma \neq \pm 1$, as expected. Now, it follows from our assumption that
\begin{align*}
\mathsf{Re}~x^*(y)=0~~\mathrm{and}~~\mathsf{Re}~\sigma x^*(y)= \mathsf{Re}~\sigma\alpha v^*(y) = \mathsf{Re}~\beta v^*(y) = \mathsf{Re}~y^*(y)=0.
\end{align*}
Let $\sigma =a+ib$, for some real numbers $a,b$. Observe that $b\neq 0$, as $\sigma \neq \pm 1.$ Therefore, 
\begin{align*}
\mathsf{Im}~x^*(y)=\mathsf{Re}~\dfrac{1}{b}(a-\sigma)x^*(y)= \dfrac{a}{b}\mathsf{Re}~ x^*(y)-\dfrac{1}{b} \mathsf{Re}~\sigma x^*(y) = 0.
\end{align*}
As a result, $\alpha v^*(y)=0$. Consequently, $x\perp_By$, which is a contradiction.\\

We now prove the sufficiency part. If possible, suppose $x^*,y^*$ are two distinct members of $\mathbb{J}(x)$. Then we can find $\alpha, \beta\in S^1$ such that $\mathsf{Re}~\alpha x^*(y)= \mathsf{Re}~\beta y^*(y)=0.$ Consequently, $(\alpha, \alpha x^*)$ and $(\beta, \beta y^*)$ are members of $\mathcal{O}(x,y).$ It follows from the hypothesis of the theorem that either $(\beta, \beta y^*) = (\alpha, \alpha x^*)$, or, $(\beta, \beta y^*)=(-\alpha, -\alpha x^*)$. However, in either case, we have $x^*=y^*.$ Therefore, the assumption that $x^*,y^*$ are distinct members of $\mathbb{J}(x)$ is not tenable. Consequently, $\mathbb{J}(x)$ is singleton and $x$ is smooth.
 
\end{proof}

As an obvious application of Theorem \ref{smoothness characterization}, we now characterize smooth Banach spaces among all complex Banach spaces.

\begin{cor}\label{smooth space}
Let $\mathbb{X}$ be a complex Banach space. Then $\mathbb{X}$ is smooth if and only if for any ordered pair $(x,y)\in \mathbb{X} \times \mathbb{X},$ with $x\not\perp_B y$, $\mathcal{O}(x,y)=\{ (\mu, u^*),(-\mu, -u^*)\}$, for some $(\mu, u^*)\in S^1\times S_{\mathbb{X}^*}$.  
\end{cor}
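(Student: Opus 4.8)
The plan is to derive this directly from Theorem \ref{smoothness characterization}, using only the defining fact that $\mathbb{X}$ is smooth precisely when every non-zero element of $\mathbb{X}$ is smooth. No new machinery is needed; the corollary is essentially Theorem \ref{smoothness characterization} with an extra quantifier over all non-zero $x$, so the proof is a short bookkeeping argument.

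For the necessity direction I would assume $\mathbb{X}$ is smooth and take an arbitrary pair $(x,y)\in\mathbb{X}\times\mathbb{X}$ with $x\not\perp_B y$. The first (and only) point requiring a word is that such an $x$ is automatically non-zero: if $x=0$ then $\|x+\lambda y\|=\|\lambda y\|\ge 0=\|x\|$ for all $\lambda\in\mathbb{C}$, so $0\perp_B y$, contradicting $x\not\perp_B y$. Hence $x$ is a non-zero element of the smooth space $\mathbb{X}$, so $x$ is smooth, and Theorem \ref{smoothness characterization} applied to this $x$ and this $y$ gives $\mathcal{O}(x,y)=\{(\mu,u^*),(-\mu,-u^*)\}$ for some $(\mu,u^*)\in S^1\times S_{\mathbb{X}^*}$, as required.

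For the sufficiency direction I would assume the displayed condition on $\mathcal{O}(x,y)$ holds for every pair with $x\not\perp_B y$, fix an arbitrary non-zero $x\in\mathbb{X}$, and check the hypothesis of Theorem \ref{smoothness characterization} for this $x$: given any $y\in\mathbb{X}$ with $x\not\perp_B y$, the pair $(x,y)$ satisfies $x\not\perp_B y$, so by assumption $\mathcal{O}(x,y)$ has the prescribed two-element form. Theorem \ref{smoothness characterization} then yields that $x$ is smooth, and since $x$ was an arbitrary non-zero element, $\mathbb{X}$ is smooth. I do not anticipate any genuine obstacle here; the only subtlety is the mismatch between the corollary quantifying over all of $\mathbb{X}\times\mathbb{X}$ (including $x=0$) and Theorem \ref{smoothness characterization} being stated for non-zero $x$, which is resolved by the elementary observation that $x\not\perp_B y$ forces $x\ne 0$.
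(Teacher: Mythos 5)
Your proposal is correct and follows exactly the route the paper intends: the paper states this corollary without proof as ``an obvious application'' of Theorem \ref{smoothness characterization}, i.e.\ applying the pointwise characterization to every non-zero $x$. Your extra remark that $x\not\perp_B y$ forces $x\neq 0$ is a valid (and harmless) tidying of the quantifier mismatch.
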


Let $\mathbb{X}$ be a  complex Banach space. Let $x,y \in \mathbb{X}$ be non-zero with $x$ being smooth. Our next corollary is about the directions along which $x$ is orthogonal to $y.$ We omit the proof, as it follows directly from the necessary part of Theorem \ref{smoothness characterization}. 

\begin{cor}\label{direction and smoothness}
Let $\mathbb{X}$ be a complex Banach space. Let $x,y \in \mathbb{X}$ be non-zero with $x$ being smooth. Let $\mathsf{S}:=\{\gamma \in S^1 :~ x\perp_\gamma y \}.$ Then either the cardinality of $\mathsf{S}$ is $2,$ or, $\mathsf{S}=S^1.$ 
\end{cor}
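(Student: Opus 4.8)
The plan is to split the argument according to whether $x\perp_B y$ or not, and in each case to translate the condition ``$\gamma\in\mathsf{S}$'' into the functional language of Theorem \ref{functional Characterization}.

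First I would dispose of the case $x\perp_B y$. Here the (complex) characterization of Birkhoff-James orthogonality recalled in the introduction gives some $u^*\in\mathbb{J}(x)$ with $u^*(y)=0$; since $x$ is smooth, $\mathbb{J}(x)=\{u^*\}$. Now fix an arbitrary $\gamma\in S^1$. The functional $\gamma u^*$ has norm one, satisfies $(\gamma u^*)(x)=\gamma\|x\|$, hence $\gamma u^*\in\mathsf{J}_\gamma(x)$, and $\mathsf{Re}~(\gamma u^*)(y)=\mathsf{Re}~(\gamma\cdot 0)=0$. So Theorem \ref{functional Characterization} yields $x\perp_\gamma y$. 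As $\gamma$ was arbitrary, $\mathsf{S}=S^1$ in this case.

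Next I would treat the remaining case $x\not\perp_B y$. By Theorem \ref{functional Characterization}, a point $\gamma\in S^1$ lies in $\mathsf{S}$ precisely when there is some $v^*\in S_{\mathbb{X}^*}$ with $v^*(x)=\gamma\|x\|$ and $\mathsf{Re}~v^*(y)=0$, i.e.\ precisely when $(\gamma,v^*)\in\mathcal{O}(x,y)$ for some $v^*$. Thus $\mathsf{S}$ is exactly the set of first coordinates occurring among the pairs in $\mathcal{O}(x,y)$. Since $x$ is smooth and $x\not\perp_B y$, Theorem \ref{smoothness characterization} gives $\mathcal{O}(x,y)=\{(\mu,u^*),(-\mu,-u^*)\}$ for some $(\mu,u^*)\in S^1\times S_{\mathbb{X}^*}$, whence $\mathsf{S}=\{\mu,-\mu\}$. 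Because $\mu\in S^1$ forces $\mu\neq-\mu$, the cardinality of $\mathsf{S}$ is exactly $2$.

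There is no genuine obstacle here; the only point needing a moment's care is that Theorem \ref{smoothness characterization} is phrased only for $y$ with $x\not\perp_B y$, so the orthogonal case must be handled separately, using smoothness directly through the fact that $\mathbb{J}(x)$ is a singleton. The rest is just the bookkeeping identification of $\mathsf{S}$ with the set of directions appearing in $\mathcal{O}(x,y)$, via Theorem \ref{functional Characterization}.
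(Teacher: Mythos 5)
Your proof is correct and takes essentially the same route as the paper, which omits the argument precisely because it follows directly from the necessity part of Theorem \ref{smoothness characterization} together with the identification, via Theorem \ref{functional Characterization}, of $\mathsf{S}$ with the set of first coordinates of pairs in $\mathcal{O}(x,y)$. (A minor simplification: in the case $x\perp_B y$ the conclusion $\mathsf{S}=S^1$ is immediate from the definitions, since $\|x+t\gamma y\|\geq \|x\|$ holds for every $t\gamma\in\mathbb{C}$, so neither smoothness nor the functional characterization is needed there.)
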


Since every Hilbert space is smooth, we have the following:
 
\begin{cor}\label{direction and Hilbert space}
Let $\mathbb{H}$ be a complex Hilbert space and let $x,y \in \mathbb{H}$ be non-zero. Let $\mathsf{S}:=\{\gamma \in S^1 :~ x\perp_\gamma y \}.$ Then either the cardinality of $\mathsf{S}$ is $2,$ or, $\mathsf{S}=S^1.$  
\end{cor}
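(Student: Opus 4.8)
The plan is to deduce Corollary~\ref{direction and Hilbert space} directly from Corollary~\ref{direction and smoothness} by recalling the standard fact that every complex Hilbert space is smooth. First I would note that for a complex Hilbert space $\mathbb{H}$ and any non-zero $x\in \mathbb{H}$, the functional $x^*(\cdot) = \langle \cdot, x/\|x\|\rangle$ lies in $\mathbb{J}(x)$, and uniqueness follows from strict convexity of the Hilbert norm: if $y^*\in \mathbb{J}(x)$ as well, then $\frac{1}{2}(x^*+y^*)$ also norms $x$, forcing $\|x^*\| = \|y^*\| = \|\tfrac12(x^*+y^*)\| = 1$, and strict convexity of $S_{\mathbb{H}^*}$ (which is again a Hilbert sphere via Riesz representation) gives $x^* = y^*$. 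Hence $\mathbb{J}(x)$ is a singleton, i.e., $x$ is smooth.

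With smoothness of $x$ (and of $y$, though only $x$ is needed) in hand, the corollary is immediate: for non-zero $x,y\in\mathbb{H}$, if $x\perp_B y$ then by Theorem~\ref{existence of direction} together with the functional characterization we get $\mathsf{S}=S^1$ in the degenerate case $y=\lambda x$ only when $\lambda\gamma$ purely imaginary fails to constrain — but more cleanly, I would simply invoke Corollary~\ref{direction and smoothness} verbatim, whose conclusion is exactly that $|\mathsf{S}|=2$ or $\mathsf{S}=S^1$. So the entire proof reduces to the one sentence ``every complex Hilbert space is smooth, hence Corollary~\ref{direction and smoothness} applies.''

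The only subtlety worth a remark is the boundary case where $x\perp_B y$: Corollary~\ref{direction and smoothness} as stated requires $x\not\perp_B y$ only in the sense that its underlying Theorem~\ref{smoothness characterization} does, but its own statement has no such hypothesis, so nothing extra is needed; if one wished, one could separately observe that $x\perp_B y$ in a Hilbert space means $\langle x,y\rangle=0$, whence $\|x+t\gamma y\|^2 = \|x\|^2 + t^2\|y\|^2 \geq \|x\|^2$ for every $\gamma\in S^1$, giving $\mathsf{S}=S^1$ directly. I do not expect any genuine obstacle here — the content is entirely in the earlier results, and this corollary is a specialization. The main (minor) point to get right is citing the correct earlier statement and making sure the two listed alternatives match verbatim, so that the proof is genuinely a one-line consequence rather than requiring a re-run of the argument.

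Thus the write-up would read, in essence: \emph{It is classical that every complex Hilbert space is smooth; indeed, for non-zero $x\in\mathbb{H}$ the Riesz functional $x^*=\langle\,\cdot\,,x/\|x\|\rangle$ is the unique element of $\mathbb{J}(x)$ by strict convexity of the Hilbert norm. The conclusion now follows at once from Corollary~\ref{direction and smoothness}.}
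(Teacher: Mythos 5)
Your proposal is correct and takes essentially the same route as the paper, which deduces Corollary~\ref{direction and Hilbert space} in one line from Corollary~\ref{direction and smoothness} by noting that every complex Hilbert space is smooth. Your additional remarks (the Riesz/strict-convexity justification that $\mathbb{J}(x)$ is a singleton, and the direct verification that $x\perp_B y$ gives $\mathsf{S}=S^1$) are accurate but not required, since Corollary~\ref{direction and smoothness} carries no $x\not\perp_B y$ hypothesis.
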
 

The above two corollaries highlight the distinction between a smooth point and a non-smooth point in a Banach space. As a whole, the above results reflect the structural differences between complex Hilbert spaces and complex Banach spaces. It is needless to mention that the concept of the directional orthogonality facilitates capturing the aforesaid geometric dissimilarities.

\section{Directional orthogonality and Hilbert spaces}

In a complex Banach space, in general it is difficult to explicitly identify the directions along which two given vectors are orthogonal. However, in case of a complex Hilbert space, we have a straightforward description of the latter, which we present in the following theorem.\\

\begin{theorem}\label{Characterization of orthogonality in Hilbert space in a particular direction}
Let $ \mathbb{H} $ be a complex Hilbert space and let $ x, y $ be two non-zero elements in $ \mathbb{H} $. Then for some $ \gamma \in S^1 $, $ x\perp_\gamma y $ if and only if $ \mathsf{Re}~{\gamma} \langle y, x \rangle = 0 $. 
\end{theorem}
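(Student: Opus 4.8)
The plan is to reduce the claim to a one-line quadratic-in-$t$ computation, exploiting that in a Hilbert space the norm is controlled by the inner product; here $\gamma\in S^1$ is to be read as fixed, so the assertion is the equivalence ``$x\perp_\gamma y$ if and only if $\mathsf{Re}\,\gamma\langle y,x\rangle=0$'' for each such $\gamma$. First I would fix $\gamma\in S^1$ and $t\in\mathbb{R}$ and expand the square of the norm:
\[
\|x+t\gamma y\|^2=\langle x+t\gamma y,\,x+t\gamma y\rangle=\|x\|^2+t\gamma\langle y,x\rangle+t\overline{\gamma}\langle x,y\rangle+t^2|\gamma|^2\|y\|^2.
\]
Using $\langle x,y\rangle=\overline{\langle y,x\rangle}$ and $|\gamma|=1$, the two cross terms combine to $2t\,\mathsf{Re}\big(\gamma\langle y,x\rangle\big)$ and the last term is $t^2\|y\|^2$, so that
\[
\|x+t\gamma y\|^2-\|x\|^2=2t\,\mathsf{Re}\big(\gamma\langle y,x\rangle\big)+t^2\|y\|^2.
\]

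Now $x\perp_\gamma y$ is precisely the assertion that the right-hand side is $\geq 0$ for every $t\in\mathbb{R}$ (the passage between $\|x+t\gamma y\|\geq\|x\|$ and its squared form being harmless since both sides are nonnegative). Since $y\neq 0$, the coefficient $\|y\|^2$ of $t^2$ is strictly positive, and a quadratic in $t$ with positive leading coefficient and vanishing constant term is nonnegative on all of $\mathbb{R}$ if and only if its linear coefficient is zero. Concretely: if $\mathsf{Re}\big(\gamma\langle y,x\rangle\big)\neq 0$, choosing $t$ of sufficiently small absolute value and of sign opposite to $\mathsf{Re}\big(\gamma\langle y,x\rangle\big)$ makes the expression negative, contradicting $x\perp_\gamma y$; conversely, if $\mathsf{Re}\big(\gamma\langle y,x\rangle\big)=0$ then the expression equals $t^2\|y\|^2\geq 0$, so $x\perp_\gamma y$ holds. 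This yields the desired equivalence.

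There is essentially no obstacle in this argument; the only points needing mild care are the inner-product convention (which variable is conjugate-linear, so that the cross terms really assemble into $2t\,\mathsf{Re}\,\gamma\langle y,x\rangle$ rather than $2t\,\mathsf{Re}\,\overline{\gamma}\langle y,x\rangle$) and the reading of the quantifier on $\gamma$ noted above. If one prefers to stay inside the functional-analytic framework of Section~3, an equally short route is available: by the Riesz representation theorem together with the equality case of the Cauchy--Schwarz inequality, $\mathsf{J}_\gamma(x)$ is the singleton $\{\langle\,\cdot\,,\overline{\gamma}x/\|x\|\rangle\}$, so its unique member $u^*$ satisfies $u^*(y)=\gamma\langle y,x\rangle/\|x\|$, and Theorem~\ref{functional Characterization} turns the condition $\mathsf{Re}\,u^*(y)=0$ into $\mathsf{Re}\,\gamma\langle y,x\rangle=0$. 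Either way, the statement also immediately recovers Corollary~\ref{direction and Hilbert space}.
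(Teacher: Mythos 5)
Your proposal is correct. Your primary argument, however, is genuinely more elementary than the paper's: you expand $\|x+t\gamma y\|^2-\|x\|^2=2t\,\mathsf{Re}\bigl(\gamma\langle y,x\rangle\bigr)+t^2\|y\|^2$ and read off the equivalence from the sign analysis of a quadratic in $t$ with positive leading coefficient and zero constant term, which settles both directions at once with no machinery beyond the inner-product identity (and you rightly flag the conjugation convention, which is the only place to slip). The paper instead treats sufficiency as immediate and derives necessity from its general Banach-space result, Theorem~\ref{functional Characterization}: it produces $u^*\in S_{\mathbb{H}^*}$ with $u^*(x)=\gamma\|x\|$ and $\mathsf{Re}\,u^*(y)=0$, notes $\overline{\gamma}u^*\in\mathbb{J}(x)$, and invokes the Riesz representation to identify $u^*(y)=\gamma\langle y,x\rangle/\|x\|$ --- exactly the alternative route you sketch at the end. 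What your computation buys is self-containedness and transparency (it does not depend on Section~3 at all); what the paper's route buys is a demonstration that the Hilbert-space statement is the specialization of the functional characterization of directional orthogonality, which fits the article's narrative of passing from Banach-space functionals to the Bhatia--\v{S}emrl setting. Your reading of the quantifier on $\gamma$ (fixed, equivalence for each $\gamma\in S^1$) matches the intended meaning and the way the theorem is used later, e.g.\ in Corollary~\ref{Generalization of Turnsek's result}.
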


\begin{proof}
Without loss of generality assume that $\|x\|=1$. The verification of the sufficiency is straightforward. Let us prove the necessity. It follows from Theorem \ref{functional Characterization} that there exists $u^*\in S_{\mathbb{H}^*}$, satisfying 
$$u^*(x)=\gamma ~\mathrm{and}~\mathsf{Re}~u^*(y)=0.$$ 
Thus, $\overline{\gamma}u^*\in \mathbb{J}(x).$ And, as the underlying space is Hilbert, necessarily 
$\overline{\gamma}u^*(z)=\langle z,x \rangle$, for all $z \in \mathbb{H}$. Hence $u^*(y) = \gamma \langle y,x\rangle $, and so $\mathsf{Re}~\gamma \langle y,x\rangle = \mathsf{Re}~u^*(y)= 0.$
 
\end{proof}

\begin{cor}\label{relation of orthogonality between x and y in a Hilbert space in a particular direction}
Let $ \mathbb{H} $ be a complex Hilbert space and let $ x, y $ be two non-zero elements in $ \mathbb{H} $. Let $ \gamma \in S^1 $. Then $ x\perp_\gamma y $ if and only if $ y \perp_{\overline{\gamma}} x $.
\end{cor}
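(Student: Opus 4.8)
The plan is to deduce Corollary \ref{relation of orthogonality between x and y in a Hilbert space in a particular direction} directly from Theorem \ref{Characterization of orthogonality in Hilbert space in a particular direction}, which is the substantive work that precedes it. The point is purely algebraic: the theorem translates $x \perp_\gamma y$ into a condition on a complex inner product, and the condition is symmetric under the interchange $x \leftrightarrow y$, $\gamma \leftrightarrow \overline\gamma$.

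First I would apply Theorem \ref{Characterization of orthogonality in Hilbert space in a particular direction} to the pair $(x,y)$ and the direction $\gamma$: we have $x \perp_\gamma y$ if and only if $\mathsf{Re}~\gamma \langle y, x\rangle = 0$. Next I would apply the same theorem to the pair $(y,x)$ — note $y,x$ are again non-zero elements of $\mathbb{H}$, so the hypotheses are satisfied — and the direction $\overline\gamma \in S^1$: we have $y \perp_{\overline\gamma} x$ if and only if $\mathsf{Re}~\overline\gamma \langle x, y\rangle = 0$. So it remains only to check that the two scalar conditions $\mathsf{Re}~\gamma\langle y,x\rangle = 0$ and $\mathsf{Re}~\overline\gamma\langle x,y\rangle = 0$ are equivalent.

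This last step is a one-line computation using conjugate-symmetry of the inner product: $\overline\gamma \langle x,y\rangle = \overline\gamma \,\overline{\langle y,x\rangle} = \overline{\gamma \langle y,x\rangle}$, and a complex number and its conjugate have the same real part, so $\mathsf{Re}~\overline\gamma\langle x,y\rangle = \mathsf{Re}~\overline{\gamma\langle y,x\rangle} = \mathsf{Re}~\gamma\langle y,x\rangle$. Chaining the three equivalences gives $x \perp_\gamma y \iff \mathsf{Re}~\gamma\langle y,x\rangle = 0 \iff \mathsf{Re}~\overline\gamma\langle x,y\rangle = 0 \iff y \perp_{\overline\gamma} x$, which is the claim.

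There is essentially no obstacle here; the only thing to be mildly careful about is the convention for which slot of $\langle\cdot,\cdot\rangle$ is conjugate-linear, so that the identity $\langle x,y\rangle = \overline{\langle y,x\rangle}$ is the one in force in the paper — but this is fixed by the statement of Theorem \ref{Characterization of orthogonality in Hilbert space in a particular direction} itself, and the argument is convention-independent anyway. The corollary is therefore immediate, and I would present it in two or three sentences.
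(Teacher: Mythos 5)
Your proposal is correct and is exactly the intended derivation: the paper states this corollary without proof as an immediate consequence of Theorem \ref{Characterization of orthogonality in Hilbert space in a particular direction}, and your two applications of that theorem together with $\mathsf{Re}~\overline{\gamma}\langle x,y\rangle = \mathsf{Re}~\overline{\gamma\langle y,x\rangle} = \mathsf{Re}~\gamma\langle y,x\rangle$ fill in precisely the omitted one-line computation.
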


\subsection{Directional orthogonality and Bhatia-\v{S}emrl Theorem}
In Theorem $ 2.6 $ of \cite{PSMM}, a Bhatia-\v{S}emrl type result has been given in the context of complex Banach spaces. In particular, Proposition $ 2.1 $ of \cite{TA} follows as a simple corollary to this result. In this article, we illustrate that Theorem $ 2.6 $ of \cite{PSMM} should be viewed as a generalization of the Bhatia-\v{S}emrl Theorem to the setting of complex Banach spaces. However, this requires some effort. For the sake of completeness, let us state the concerned theorem from \cite{PSMM}.

\begin{theorem}\label{Relation between direction and M_T}
Let $ \mathbb{X},~\mathbb{Y} $ be finite-dimensional complex Banach spaces and let $ T,A \in \mathbb{L}(\mathbb{X},\mathbb{Y}) $ be  linear operators such that $ M_T $ is connected. Then $ T \perp_B A $ if and only if for each $ \gamma\in S^1 $, there exists $ x\in M_T $ such that $ Tx \perp_\gamma Ax $.
\end{theorem}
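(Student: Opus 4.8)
The plan is to prove Theorem~\ref{Relation between direction and M_T} by combining the functional-analytic characterization of directional orthogonality (Theorem~\ref{functional Characterization}) with a compactness/connectedness argument on $M_T$, in the spirit of known Bhatia--\v{S}emrl type results. The sufficiency direction should be the easy one: suppose that for every $\gamma \in S^1$ there is $x_\gamma \in M_T$ with $Tx_\gamma \perp_\gamma Ax_\gamma$. Taking $\gamma = 1$ gives $x \in M_T$ with $Tx \perp_1 Ax$, i.e. $\|Tx + tAx\| \geq \|Tx\| = \|T\|$ for all $t \in \mathbb{R}$; I would then want the full Birkhoff--James statement $\|T + \lambda A\| \geq \|T\|$ for all $\lambda \in \mathbb{C}$. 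Writing $\lambda = re^{i\theta}$ with $r \geq 0$, one applies the hypothesis to the direction $\gamma = e^{i\theta}$ to get some $x' \in M_T$ with $\|Tx' + r e^{i\theta} Ax'\| \geq \|Tx'\| = \|T\|$, hence $\|T + \lambda A\| \geq \|(T+\lambda A)x'\| = \|Tx' + \lambda Ax'\| \geq \|T\|$. So sufficiency does not even need connectedness of $M_T$.

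For the necessity direction, assume $T \perp_B A$ and fix $\gamma \in S^1$; I must produce $x \in M_T$ with $Tx \perp_\gamma Ax$. First, by scaling assume $\|T\| = 1$. Using the complex analogue of James's characterization mentioned in the introduction, $T \perp_B A$ yields a norming functional on $\mathbb{L}(\mathbb{X},\mathbb{Y})$ annihilating $A$; in finite dimensions the extreme points of the relevant dual ball are of the form $S \mapsto y^*(Sx)$ with $x \in S_{\mathbb{X}}$, $y^* \in S_{\mathbb{Y}^*}$, and such a norming functional forces $x \in M_T$ and $y^* \in \mathbb{J}(Tx)$. Thus $T \perp_B A$ produces, for at least one choice, a pair; but I need the direction $\gamma$ to be arbitrary, and that is where the connectedness of $M_T$ enters. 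The idea is: for $x \in M_T$, the set $\mathsf{S}(x) := \{\gamma \in S^1 : Tx \perp_\gamma Ax\}$ is, by Theorem~\ref{Directions of orthogonality} (applied to the vectors $Tx, Ax$ in $\mathbb{Y}$), either all of $S^1$ (when $Tx \perp_B Ax$) or a union of two diametrically opposite closed arcs; and by Theorem~\ref{existence of direction} it is always non-empty. I want to show $\bigcup_{x \in M_T} \mathsf{S}(x) = S^1$. Suppose not, so some $\gamma_0 \in S^1$ is missed. For each $x \in M_T$, since $\gamma_0 \notin \mathsf{S}(x)$ and $\mathsf{S}(x) \ne S^1$, Proposition~\ref{basic preliminaries}(i) assigns $x$ to exactly one of the two ``sides'': $Ax \in (Tx)_{\gamma_0}^+ \setminus (Tx)_{\gamma_0}^-$ or $Ax \in (Tx)_{\gamma_0}^- \setminus (Tx)_{\gamma_0}^+$. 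Call these sets $M_T^+$ and $M_T^-$; they partition $M_T$. A limiting argument as in the proof of Theorem~\ref{existence of direction} shows each is closed in $M_T$, so by connectedness of $M_T$ one of them, say $M_T^-$, is empty, i.e. for every $x \in M_T$ we have $Ax \in (Tx)_{\gamma_0}^+$: there exists $t_x > 0$ with strict inequality failing in the negative direction, equivalently $\|Tx - t\gamma_0 Ax\| < \|T\|$ for some small $t > 0$, uniformly --- and here a compactness argument on $M_T$, together with the standard fact that on the (compact) set of unit vectors bounded away from $M_T$ the norm of $T$ is strictly less than $1$, should yield a single $\varepsilon > 0$ with $\|T - \varepsilon\gamma_0 A\| < 1 = \|T\|$, contradicting $T \perp_B A$.

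Making that last step precise is the main obstacle. The clean way is: set $f(x) := \|(T - \varepsilon \gamma_0 A)x\|$ and show $\sup_{x \in S_{\mathbb{X}}} f(x) < 1$ for suitable small $\varepsilon$. Split $S_{\mathbb{X}}$ into a neighborhood $N$ of $M_T$ and its complement; on $S_{\mathbb{X}} \setminus N$ one has $\|Tx\| \le 1 - \eta$ for some $\eta > 0$, so $f(x) \le 1 - \eta + \varepsilon\|A\| < 1$ for $\varepsilon$ small; on $N$ one uses that $Ax \in (Tx)_{\gamma_0}^+$ for $x \in M_T$ plus a continuity/convexity estimate (the norm is convex, so $\|Tx - t\gamma_0 Ax\|$ being $< 1$ at one small $t$ gives it $< 1$ on an interval, as in \eqref{convexity of norm}) to get $f < 1$ there for $\varepsilon$ small, shrinking $N$ if necessary. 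One must be careful that the ``$t_x$'' can be chosen uniformly; this is where finite-dimensionality (compactness of $M_T$ and of $S_{\mathbb{X}}$) is essential. Once the contradiction $\|T - \varepsilon\gamma_0 A\| < \|T\|$ is reached, we conclude $\bigcup_{x\in M_T}\mathsf{S}(x) = S^1$, so the given $\gamma$ lies in $\mathsf{S}(x)$ for some $x \in M_T$, i.e. $Tx \perp_\gamma Ax$, completing the proof.
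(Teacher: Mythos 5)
A point of comparison first: the paper itself does not prove Theorem~\ref{Relation between direction and M_T} --- it is quoted verbatim as Theorem~2.6 of \cite{PSMM} --- so there is no in-paper proof to measure you against; what you have written is an independent reconstruction, and in outline it is the standard one (the directional analogue of the real-case argument of \cite{S,SP}, which is also how \cite{PSMM} proceeds). Your sufficiency argument is complete and correct, and indeed needs no connectedness. The necessity skeleton is also right: fixing a direction $\gamma_0$ allegedly missed by every $x\in M_T$, Proposition~\ref{basic preliminaries}(i) and (iii) split $M_T$ into the two disjoint sets $M_T^{\pm}$, the limiting argument from Theorem~\ref{existence of direction} shows each is relatively closed, and connectedness of $M_T$ forces one of them, say $M_T^{-}$, to be empty. (The detour through extreme points of the dual ball of $\mathbb{L}(\mathbb{X},\mathbb{Y})$ is not needed and, as stated, is a bit glib --- a norming functional annihilating $A$ need not be of the form $S\mapsto y^*(Sx)$ --- but you never use it, so no harm is done.)

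The only place you leave genuinely open is the uniform choice of $\varepsilon$, and you are right that this is the crux; fortunately it closes exactly along the lines you indicate. For each $x\in M_T$ there is $t_x>0$ with $\|Tx-t_x\gamma_0 Ax\|<\|T\|$, and by the convexity argument of \eqref{convexity of norm} the same strict inequality holds for all $t\in(0,t_x]$; hence the sets $F_n:=\{x\in M_T:\ \|Tx-\tfrac{1}{n}\gamma_0 Ax\|<\|T\|\}$ form an increasing relatively open cover of the compact set $M_T$, so a single $t_0>0$ works for all of $M_T$, with $\sup_{x\in M_T}\|Tx-t_0\gamma_0 Ax\|=1-\delta<1$ attained by compactness. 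Then $N:=\{u\in S_{\mathbb{X}}:\ \|Tu-t_0\gamma_0 Au\|<1-\tfrac{\delta}{2}\}$ is an open neighborhood of $M_T$, and for $u\in N$ and $0<\varepsilon\leq t_0$ convexity gives $\|Tu-\varepsilon\gamma_0 Au\|\leq(1-\tfrac{\varepsilon}{t_0})\|Tu\|+\tfrac{\varepsilon}{t_0}(1-\tfrac{\delta}{2})\leq 1-\tfrac{\varepsilon\delta}{2t_0}$, while on the compact set $S_{\mathbb{X}}\setminus N$ one has $\|Tu\|\leq 1-\eta$ for some $\eta>0$, so $\|Tu-\varepsilon\gamma_0 Au\|\leq 1-\eta+\varepsilon\|A\|<1$ for $\varepsilon$ small. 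Taking $\varepsilon$ below both thresholds yields $\|T-\varepsilon\gamma_0 A\|<\|T\|$, contradicting $T\perp_B A$ (with $\lambda=-\varepsilon\gamma_0$). With that paragraph inserted, your proposal is a correct and self-contained proof of the quoted theorem, resting only on Proposition~\ref{basic preliminaries}, Theorem~\ref{existence of direction}, and compactness in finite dimensions.
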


As mentioned earlier, we make note of the following corollary which follows from the above theorem. This gives an alternative proof of Proposition $ 2.1 $ of \cite{TA}.

\begin{cor}\label{Generalization of Turnsek's result}
Let $ \mathbb{H} $ be a finite-dimensional complex Hilbert space and let $ T,A \in \mathbb{L}(\mathbb{H}) $. Then $ T\perp_B A $ if and only if for each direction $ \gamma \in S^1 $, there exists $ x\in M_T $ such that $ \mathsf{Re}~{\gamma}\langle Ax, Tx \rangle = 0 $. 
\end{cor}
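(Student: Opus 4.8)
The plan is to derive Corollary \ref{Generalization of Turnsek's result} directly from Theorem \ref{Relation between direction and M_T} combined with the Hilbert space description of directional orthogonality in Theorem \ref{Characterization of orthogonality in Hilbert space in a particular direction}. First I would note that in a finite-dimensional Hilbert space $\mathbb{H}$, the norm attainment set $M_T$ of any nonzero operator $T$ is connected: this is the Toeplitz--Hausdorff type fact (or a direct convexity argument on the unit sphere) that, up to sign, $M_T$ is precisely the unit sphere of the eigenspace of $T^*T$ corresponding to its largest eigenvalue $\|T\|^2$, which is connected (for $\dim \geq 1$ it is a sphere, hence connected; and a single pair $\{\pm x\}$ is handled by noting $M_T$ is always taken in $S_{\mathbb{H}}$ and the relevant object is this eigen-sphere). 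This verifies the hypothesis ``$M_T$ is connected'' needed to invoke Theorem \ref{Relation between direction and M_T}.

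Next I would apply Theorem \ref{Relation between direction and M_T} with $\mathbb{X} = \mathbb{Y} = \mathbb{H}$: it gives that $T \perp_B A$ if and only if for each $\gamma \in S^1$ there exists $x \in M_T$ with $Tx \perp_\gamma Ax$. Then, for each fixed $\gamma$ and the corresponding $x \in M_T$, I would translate the directional orthogonality $Tx \perp_\gamma Ax$ of the two vectors $Tx, Ax \in \mathbb{H}$ using Theorem \ref{Characterization of orthogonality in Hilbert space in a particular direction}: since $Tx \neq 0$ (as $x \in M_T$ and $\|T\| \neq 0$; the degenerate case $T = 0$ makes the statement vacuous or trivial), and assuming $Ax \neq 0$, one has $Tx \perp_\gamma Ax$ if and only if $\mathsf{Re}~\gamma \langle Ax, Tx \rangle = 0$. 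The case $Ax = 0$ is consistent since then $\langle Ax, Tx\rangle = 0$ and directional orthogonality holds trivially in every direction, so the equivalence still reads correctly. Chaining the two equivalences yields: $T \perp_B A$ iff for each $\gamma \in S^1$ there exists $x \in M_T$ with $\mathsf{Re}~\gamma \langle Ax, Tx \rangle = 0$, which is exactly the claim.

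I would write this out as a short three-step argument: (1) establish connectedness of $M_T$ in the finite-dimensional Hilbert setting; (2) invoke Theorem \ref{Relation between direction and M_T}; (3) rewrite $Tx \perp_\gamma Ax$ via Theorem \ref{Characterization of orthogonality in Hilbert space in a particular direction}, handling the trivial sub-cases $Tx = 0$ or $Ax = 0$ explicitly. The only genuine point requiring care — the ``main obstacle'' such as it is — is step (1), the connectedness of $M_T$; but this is classical (it is, in effect, the observation that the maximal-eigenvalue eigenspace of the positive operator $T^*T$ meets $S_{\mathbb{H}}$ in a connected set), and everything else is a direct substitution. One should also remark that, although the statement is phrased for every direction $\gamma$, the real content is already in Theorem \ref{Relation between direction and M_T}; the corollary merely specializes the ambient space and records the inner-product form, so no new ideas are needed beyond citing the Toeplitz--Hausdorff Theorem (already invoked in the paper) for the connectedness of $M_T$.
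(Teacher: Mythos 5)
Your proposal is correct and takes essentially the same route as the paper: the paper gets connectedness of $M_T$ by citing Theorem 2.2 of \cite{SP} (which is precisely your observation that $M_T$ is the unit sphere of the top eigenspace of $T^*T$, hence connected in the complex setting) and then combines Theorem \ref{Relation between direction and M_T} with Theorem \ref{Characterization of orthogonality in Hilbert space in a particular direction}, exactly as you do. One small mislabel: this connectedness fact is not a Toeplitz--Hausdorff-type statement --- the eigenspace argument you actually give is the right one, and no appeal to Toeplitz--Hausdorff is involved here.
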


\begin{proof}
From Theorem 2.2 of \cite{SP}, it follows that $ M_T $ is connected. Therefore, the proof of the corollary follows from Theorem \ref{Characterization of orthogonality in Hilbert space in a particular direction} and Theorem \ref{Relation between direction and M_T}.
 
\end{proof}

As the most important application of our study, we would like to obtain a completely new proof of the Bhatia-\v{S}emrl Theorem, which is strikingly simple and geometrically motivated. Moreover, to our surprise, we observe that it is possible to generalize the celebrated Toeplitz-Hausdorff Theorem and to apply it to serve our purpose. The proof of the following theorem, which generalizes the Toeplitz-Hausdorff Theorem, is strongly inspired by \cite{GK}.
Let $\mathbb{X}$ be a Banach space. Recall that a linear operator $\mathcal{I}\in \mathbb{L}(\mathbb{X})$ is called isometry if $\|\mathcal{I}(x)\|=\|x\|$, for all $x\in \mathbb{X}.$

\begin{theorem}\label{Generalization of Toeplitz-Hausdorff Theorem}
Let $ \mathbb{H} $ be a complex Hilbert space and let $ T,A\in \mathbb{L}(\mathbb{H}) $. Let $\mathbb{H}_0$ be a subspace of $\mathbb{H}$ such that $T$ is a scalar multiple of an isometry on $\mathbb{H}_0.$ Then
$$W_A(T) := \left\{ \langle Ax,Tx \rangle :~ x\in S_{\mathbb{H}_0} \right\}$$
is a convex subset of $\mathbb{C}.$
\end{theorem}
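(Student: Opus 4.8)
The plan is to follow the classical Toeplitz--Hausdorff strategy of reducing the convexity of $W_A(T)$ to the two-dimensional case and then handling that case by an explicit computation, but throughout we must carry along the hypothesis that $T$ acts as a scalar multiple of an isometry on $\mathbb{H}_0$. First I would normalize: if $T|_{\mathbb{H}_0}=0$ then $W_A(T)=\{0\}$ and we are done; otherwise $T=c\,\mathcal{I}$ on $\mathbb{H}_0$ for some $c\in\mathbb{C}\setminus\{0\}$ and an isometry $\mathcal{I}\colon\mathbb{H}_0\to\mathbb{H}$, so $\langle Ax,Tx\rangle=\overline{c}\,\langle Ax,\mathcal{I}x\rangle$; since multiplying a set in $\mathbb{C}$ by the fixed scalar $\overline{c}$ preserves convexity, it suffices to prove convexity of $\{\langle Ax,\mathcal{I}x\rangle:x\in S_{\mathbb{H}_0}\}$. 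Replacing $A$ by $A$ and recording that $\mathcal{I}$ is a (possibly non-surjective) linear isometry, the key structural fact we will use is the polarization-type identity: because $\mathcal{I}$ is an isometry, $\langle \mathcal{I}x,\mathcal{I}y\rangle=\langle x,y\rangle$ for all $x,y\in\mathbb{H}_0$.

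Next, given two points $\langle A x_1,\mathcal{I}x_1\rangle$ and $\langle A x_2,\mathcal{I}x_2\rangle$ of $W_A(T)$ (after the reduction), with $x_1,x_2\in S_{\mathbb{H}_0}$, I would pass to the two-dimensional subspace $\mathbb{H}_0'=\mathrm{span}\{x_1,x_2\}\subseteq\mathbb{H}_0$. The point is that every $x$ in the relevant computation will lie in $\mathbb{H}_0'$, so we only need to understand the map $x\mapsto\langle A x,\mathcal{I}x\rangle$ on $S_{\mathbb{H}_0'}$. Following \cite{GK}, one parametrizes the unit sphere of a two-dimensional complex Hilbert space: after choosing an orthonormal basis $\{e_1,e_2\}$ of $\mathbb{H}_0'$ (which exists since $\mathbb{H}_0'$ is a two-dimensional inner product space), a general unit vector is $x=\cos\varphi\,e_1+e^{i\psi}\sin\varphi\,e_2$. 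Plugging in, $\langle A x,\mathcal{I}x\rangle$ becomes an explicit trigonometric expression in $\varphi,\psi$ whose real and imaginary parts trace out a closed curve; the classical Toeplitz--Hausdorff argument shows this curve, together with its interior, is obtained as the continuous image of a disk, and in particular the segment joining the two chosen points lies in the image. The only modification from the standard proof is that the ``diagonal'' term involves $\langle \mathcal{I}e_1,\mathcal{I}e_2\rangle=\langle e_1,e_2\rangle=0$, i.e. the isometry does not disturb the orthonormality, so the computation is formally identical to the case $\mathcal{I}=\mathrm{id}$.

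A cleaner way to organize the two-dimensional step, which I would actually write out, is the standard reduction used in Hausdorff's proof: to show the segment between any two values is attained, it suffices to treat the case where the two endpoints are real (by a rotation $A\mapsto e^{i\theta}A$, which rotates $W_A(T)$ and preserves convexity as a property) and then to show that $\varphi\mapsto \mathrm{Im}\,\langle A x_\varphi,\mathcal{I}x_\varphi\rangle$ takes the value $0$ for some unit vector $x_\varphi$ on a path from $x_1$ to $x_2$ while $\mathrm{Re}\,\langle A x_\varphi,\mathcal{I}x_\varphi\rangle$ runs over an interval containing both endpoints --- an intermediate value theorem argument along the path $\varphi\mapsto\cos\varphi\,x_1+\sin\varphi\,\widetilde{x_2}$, where $\widetilde{x_2}$ is a unit vector orthogonal to $x_1$ in $\mathbb{H}_0'$. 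Here again the isometry hypothesis enters only through $\langle \mathcal{I}u,\mathcal{I}v\rangle=\langle u,v\rangle$, ensuring the path stays in $S_{\mathbb{H}_0}$ after applying $\mathcal{I}$ has the expected inner-product behavior.

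The main obstacle I anticipate is purely bookkeeping rather than conceptual: one must be careful that $\mathcal{I}$ need not be surjective, so $\mathcal{I}(\mathbb{H}_0')$ is a two-dimensional subspace of $\mathbb{H}$ that may be unrelated to $\mathbb{H}_0'$ itself, and $A$ maps into all of $\mathbb{H}$; nonetheless the quantity $\langle Ax,\mathcal{I}x\rangle$ only ever pairs $Ax$ against a vector in $\mathcal{I}(\mathbb{H}_0')$, so after choosing the orthonormal basis $\{e_1,e_2\}$ of $\mathbb{H}_0'$ one should expand $\mathcal{I}x=\cos\varphi\,\mathcal{I}e_1+e^{i\psi}\sin\varphi\,\mathcal{I}e_2$ and use that $\{\mathcal{I}e_1,\mathcal{I}e_2\}$ is orthonormal in $\mathbb{H}$. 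With that observation in hand, writing $a_{jk}:=\langle Ae_k,\mathcal{I}e_j\rangle$ reduces $\langle Ax,\mathcal{I}x\rangle$ to the same quadratic form in $(\cos\varphi,e^{i\psi}\sin\varphi)$ that appears in the classical proof, and the rest is the standard argument. I would therefore present the proof as: (1) reduce to $T$ an isometry via the scalar; (2) reduce to two dimensions; (3) reduce to real endpoints via rotation; (4) conclude by the intermediate value theorem applied to the explicit trigonometric parametrization, citing \cite{GK} for the computational core.
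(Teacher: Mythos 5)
Your proposal is correct, but its engine differs from the paper's. The paper runs Gustafson's argument \cite{GK} directly with $T$ playing the role of the identity: after an affine normalization it replaces $A$ by $P:=\gamma_0 A+\frac{\sigma_0}{c^2}T$ (here the hypothesis enters through $\langle Ty,Ty\rangle=c^2$ for all $y\in S_{\mathbb{H}_0}$, so that $\frac{1}{c^2}T$ acts as the constant shift), seeks a real-coefficient combination $x_0=bx_1+ax_2$ with $\langle Px_0,Tx_0\rangle=\lambda$, makes the cross term $N_{x_1}$ real by replacing $x_1$ with a unimodular multiple, and finishes by intersecting an ellipse with a hyperbola in the $(a,b)$-plane. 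You instead factor $T=c\,\mathcal{I}$ with $\mathcal{I}$ an isometry, pass to $\mathbb{H}_0'=\mathrm{span}\{x_1,x_2\}$, and use $\langle\mathcal{I}u,\mathcal{I}v\rangle=\langle u,v\rangle$ to see that $x\mapsto\langle Ax,\mathcal{I}x\rangle$ on $S_{\mathbb{H}_0'}$ is exactly the numerical range of the $2\times 2$ matrix $\bigl(\langle Ae_k,\mathcal{I}e_j\rangle\bigr)$; convexity then follows from the classical two-dimensional Toeplitz--Hausdorff theorem. Your route is more modular and makes transparent that the generalization reduces to the classical statement, at the price of importing (or re-proving) the $2\times 2$ case; the paper's route is self-contained and yields the classical theorem as a corollary rather than using it. Two small points to tighten if you write this up: your intermediate-value sketch needs the usual phase adjustment of $x_2$ (or $x_1$) so that the cross term is real, making the imaginary part vanish identically along the path $\varphi\mapsto\cos\varphi\,x_1+\sin\varphi\,\widetilde{x_2}$ rather than merely at one point --- this is precisely the role of the paper's unimodular factor $\kappa$; and you should dispose of the degenerate case $x_2\in\mathbb{C}x_1$ (then the two values coincide and there is nothing to prove) before invoking a two-dimensional span.
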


\begin{proof}
 Let $\|Tx\| = c\|x\|, $ for all $x\in \mathbb{H}_0$, and for some $c\geq0.$  If $c=0,$ then $ W_A(T) = \{ 0 \} $, and we have nothing to prove. Let $\mu_1, \mu_2$ be two distinct members of $W_A(T).$ Find $x_1,x_2\in S_{\mathbb{H}_0}$ such that $ \langle Ax_1, Tx_1 \rangle = \mu_1 ~\mathrm{and}~ \langle Ax_2, Tx_2 \rangle = \mu_2.$ We have to show that the linear segment $[\mu_1, \mu_2]$ lies in $W_A(T).$ Observe that for all $y\in S_{\mathbb{H}_0}$, 
\begin{align*}
\left \langle \left (\gamma A+\dfrac{\sigma}{c^2}T \right )y, Ty\right \rangle = \gamma \langle Ay, Ty \rangle +\sigma,~ \mathrm{for~scalars}~ \gamma , \sigma.    
\end{align*}
Let $ \gamma_0 := \dfrac{1}{\mu_1-\mu_2} $, $ \sigma_0 := \dfrac{-\mu_2}{\mu_1-\mu_2} $ and let $ P := {\gamma_0 A + \dfrac{\sigma_0}{c^2}}T $. Then we have that
\begin{align}\label{translation}
W_P(T) = \gamma_0 W_A(T) + \sigma_0~;\quad \langle Px_1, Tx_1 \rangle = 1,~\langle Px_2, Tx_2 \rangle = 0.
\end{align}
It follows from (\ref{translation}) that
$$ [\mu_1, \mu_2]  \subseteq W_A(T)~\mathrm{if~and~only~if}~ [0,1] \subseteq W_P(T).$$

Consider any $\lambda \in [0,1].$ We claim that  there exists $x_0:=bx_1+ax_2$ with suitable real $a,b$ such that $\|x_0\|=1~\mathrm{and}~\langle Px_0,Tx_0\rangle= \lambda;$ thus $\lambda\in W_P(T). $ It is enough to show that the following system of equations
\begin{align}\label{ellipse} 
a^2 + b^2 + 2ab ~\mathsf{Re}~ \langle x_1,x_2 \rangle & = 1,
\end{align}
\begin{align}\label{hyperbola}
b^2 + ab \left\{\left\langle Px_1, Tx_2 \rangle + \langle Px_2,Tx_1\right\rangle \right\} & = \lambda,
\end{align}
\vspace{0.5 cm}
\begin{figure}[h]
\begin{center}
\begin{tikzpicture}
\draw[thick][->] (-3.75,0)--(3.75,0) node[below left]{{a}};
\draw[thick][->] (0,-3.75)--(0,3.75) node[above]{{b}};
\draw[rotate=56,black] circle(2cm and 1cm);
\pgfmathsetmacro{\e}{1.44022}   
    \pgfmathsetmacro{\a}{1}
    \pgfmathsetmacro{\b}{(\a*sqrt((\e)^2-1)} 
    \draw plot[domain=-1.85:1.85] ({\a*sinh(\x)},{\b*cosh(\x)});
    \draw plot[domain=-1.85:1.85] ({\a*sinh(\x)},{-\b*cosh(\x)});

\end{tikzpicture}
\end{center}
\caption{Intersection of ellipse and hyperbola.} 
\label{Intersection of ellipse and hyperbola}
\end{figure}
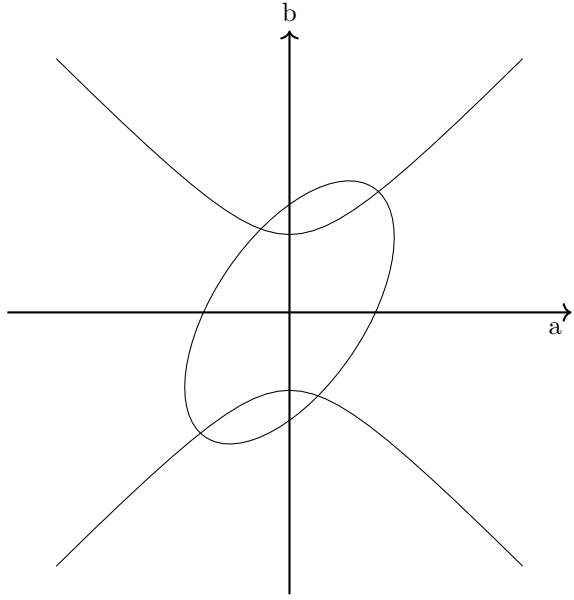
\noindent with unknown $a,b \in \mathbb{R},$ possesses a real solution. We observe that $ | \mathsf{Re}~\langle x_1,x_2 \rangle | < 1 $. Indeed, $ | \mathsf{Re}~\langle x_1, x_2 \rangle | = 1 $ together with the Schwarz's inequality produces $ \langle x_1, x_2 \rangle = \pm 1 $. If $ \langle x_1, x_2 \rangle = 1 $, then we obtain $ \langle x_1-x_2, x_1-x_2 \rangle = 0 $. As a result, $\|x_1-x_2\|=0$ and $x_1=x_2.$ However, this is a contradiction, since $ \langle Px_1, Tx_1\rangle \neq \langle Px_2, Tx_2 \rangle .$ Similarly, considering $ \langle x_1, x_2 \rangle = -1 $, we arrive at the same contradiction. Therefore, $ | \mathsf{Re}~\langle x_1,x_2 \rangle | < 1 $, as expected.\\ Let $$ N_{x_1}:= \left\{\left\langle Px_1, Tx_2 \rangle + \langle Px_2,Tx_1\right\rangle \right\}.$$ If $ N_{x_1} $ is real, then equations (\ref{ellipse}) and (\ref{hyperbola}) represent an ellipse (intercepts $ \pm 1 $ with $a$-axis and $b$-axis) and a hyperbola (intercepts $ \pm \sqrt{\lambda} $ with $b$-axis), respectively (Figure \ref{Intersection of ellipse and hyperbola}).

Observe that $ N_{x_1} $ can always be chosen real by choosing an appropriate scalar multiple of $ x_1 $. Choose $ x_1' = \kappa x_1 $, where $ \kappa = m+in $ with $ m^2+n^2 = 1 $. Note that 
\begin{align*}
\mathsf{Im}~N_{x_1'} & = \mathsf{Im}~\{(m+in) \left\langle Px_1, Tx_2 \rangle + (m-in)\langle Px_2,Tx_1\right\rangle \}\\ 
&= m~\mathsf{Im}~\langle Px_1,Tx_2\rangle +n~\mathsf{Re}~\langle Px_1,Tx_2\rangle+m~\mathsf{Im}~\langle Px_2,Tx_1\rangle -n~\mathsf{Re}~\langle Px_2,Tx_1\rangle\\
&=  m~\mathsf{Im}~N_{x_1} + n~\mathsf{Re}~\{\langle Px_1,Tx_2\rangle - \langle Px_2,Tx_1\rangle \}
\end{align*}
It is not difficult to see that the system of equations
\begin{align}\label{circle}
& m^2 + n^2  = 1,\\ 
&\mathsf{Im}~N_{x_1'} = m~\mathsf{Im}~N_{x_1} + n~\mathsf{Re}~\{\langle Px_1,Tx_2\rangle - \langle Px_2,Tx_1\rangle \} = 0, 
\end{align}
possesses two solutions. As a result, the system of equations (\ref{ellipse}) and (\ref{hyperbola}) possesses (four) solutions for $a$ and $b$. Thus, $ [0,1] \subseteq W_P(T) $, as expected. 
 
\end{proof}

Let $\mathbb{H}$ be a complex Hilbert space and let $T\in \mathbb{L}(\mathbb{H}).$ It follows from Theorem 2.2 of \cite{SP} that if $M_T\neq\emptyset$, then $M_T=S_{\mathbb{H}_0}$, for some subspace $\mathbb{H}_0$ of $\mathbb{H}.$ Therefore, $T$ is a scalar multiple of an isometry on $\mathbb{H}_0$, and we have the following:

\begin{cor}\label{Generalization of Toeplitz-Hausdorff Theorem M}
Let $ \mathbb{H} $ be a complex Hilbert space and let $ T,A\in \mathbb{L}(\mathbb{H}) $. Then
either $M_T=\emptyset$ or $\left\{ \langle Ax,Tx \rangle :~ x\in M_T \right\}$
is a  convex subset of $\mathbb{C}.$
\end{cor}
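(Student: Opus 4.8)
The plan is to obtain this as a direct packaging of Theorem \ref{Generalization of Toeplitz-Hausdorff Theorem} together with the structural description of the norm attainment set of a Hilbert space operator. If $M_T=\emptyset$, there is nothing to prove, so I would assume $M_T\neq\emptyset$. The first step is to invoke the cited fact (Theorem $2.2$ of \cite{SP}) that, when $M_T\neq\emptyset$, there is a subspace $\mathbb{H}_0$ of $\mathbb{H}$ with $M_T=S_{\mathbb{H}_0}$.

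The second step is to verify that $T$ is a scalar multiple of an isometry on $\mathbb{H}_0$, since that is precisely the hypothesis required to apply Theorem \ref{Generalization of Toeplitz-Hausdorff Theorem}. For every $x\in S_{\mathbb{H}_0}$ we have $x\in M_T$, hence $\|Tx\|=\|T\|$; for an arbitrary nonzero $x\in\mathbb{H}_0$, applying this to $x/\|x\|$ and using linearity gives $\|Tx\|=\|T\|\,\|x\|$, and the case $x=0$ is trivial. Thus $\|Tx\|=c\,\|x\|$ on $\mathbb{H}_0$ with $c=\|T\|$, as needed.

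The final step is to apply Theorem \ref{Generalization of Toeplitz-Hausdorff Theorem} with this choice of $\mathbb{H}_0$: the set $W_A(T)=\{\langle Ax,Tx\rangle:~x\in S_{\mathbb{H}_0}\}$ is convex, and since $S_{\mathbb{H}_0}=M_T$, this set is exactly $\{\langle Ax,Tx\rangle:~x\in M_T\}$, which completes the argument.

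As for difficulties: there is essentially no obstacle remaining, since the entire analytic weight of the statement has already been carried by Theorem \ref{Generalization of Toeplitz-Hausdorff Theorem} and by the cited description of $M_T$ as the unit sphere of a subspace. The only point requiring a line of care is the homogeneity argument passing from ``$\|Tx\|=\|T\|$ on $S_{\mathbb{H}_0}$'' to ``$T$ is a scalar multiple of an isometry on all of $\mathbb{H}_0$'', and this is immediate.
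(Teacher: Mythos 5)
Your proposal is correct and follows essentially the same route as the paper: the paper also invokes Theorem 2.2 of \cite{SP} to write $M_T=S_{\mathbb{H}_0}$ for a subspace $\mathbb{H}_0$, notes that $T$ is then a scalar multiple of an isometry on $\mathbb{H}_0$, and applies Theorem \ref{Generalization of Toeplitz-Hausdorff Theorem}. Your explicit homogeneity check of the isometry condition is the only detail the paper leaves implicit.
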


The well-known Toeplitz-Hausdorff Theorem is an immediate consequence of Theorem \ref{Generalization of Toeplitz-Hausdorff Theorem}. 

\begin{cor}(Toeplitz-Hausdorff Theorem)\label{Toeplitz Hausdorff Theorem}
Let $ \mathbb{H} $ be a complex Hilbert space and let $ A\in \mathbb{L}(\mathbb{H}) $. Then the numerical range of $ A $, defined by
\[ W(A) := \{ \langle Ax, x \rangle :~ x\in S_{\mathbb{H}} \} \]
is a convex subset of $\mathbb{C}$.
\end{cor}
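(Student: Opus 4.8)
The plan is to derive the Toeplitz--Hausdorff Theorem as the special case of Theorem \ref{Generalization of Toeplitz-Hausdorff Theorem} obtained by making the most convenient choices for the data $T$, $A$, and $\mathbb{H}_0$. First I would take $\mathbb{H}_0 := \mathbb{H}$, which is a subspace of itself, so that $S_{\mathbb{H}_0} = S_{\mathbb{H}}$. Then I would choose $T := I$, the identity operator on $\mathbb{H}$, which is an isometry (indeed a scalar multiple of one, with scalar $c = 1$), so the hypothesis of Theorem \ref{Generalization of Toeplitz-Hausdorff Theorem} that $T$ be a scalar multiple of an isometry on $\mathbb{H}_0$ is satisfied. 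Finally I would take $A$ to be the given operator whose numerical range is under consideration.

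With these substitutions, the set $W_A(T)$ appearing in Theorem \ref{Generalization of Toeplitz-Hausdorff Theorem} becomes
\[
W_A(I) = \{ \langle Ax, Ix \rangle :~ x\in S_{\mathbb{H}} \} = \{ \langle Ax, x \rangle :~ x\in S_{\mathbb{H}} \} = W(A),
\]
which is precisely the numerical range of $A$ as defined in the statement of Corollary \ref{Toeplitz Hausdorff Theorem}. Theorem \ref{Generalization of Toeplitz-Hausdorff Theorem} asserts that $W_A(T)$ is a convex subset of $\mathbb{C}$, and therefore $W(A)$ is convex. This completes the argument.

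There is essentially no obstacle here: the entire content has already been established in the more general Theorem \ref{Generalization of Toeplitz-Hausdorff Theorem}, and the only thing to check is that the identity operator legitimately fits the hypothesis (which it does, being an isometry), and that the specialized set literally coincides with the classical numerical range (which it does, since $Ix = x$). Thus the proof is a one-line deduction, and the only care needed is to confirm that $I \in \mathbb{L}(\mathbb{H})$ and that $S_{\mathbb{H}}$ is a legitimate choice for $S_{\mathbb{H}_0}$, both of which are immediate.
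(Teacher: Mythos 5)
Your proposal is correct and is exactly the specialization the paper intends: the paper states the corollary as an immediate consequence of Theorem \ref{Generalization of Toeplitz-Hausdorff Theorem}, and taking $T=I$, $\mathbb{H}_0=\mathbb{H}$ so that $W_A(I)=W(A)$ is precisely that deduction.
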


We finish the article with an elementary proof of the Bhatia-\v{S}emrl Theorem that follows as an application of Theorem \ref{Relation between direction and M_T} and Corollary \ref{Generalization of Toeplitz-Hausdorff Theorem M}.\\

\noindent {\emph{(Bhatia-\v{S}emrl Theorem)
Let $ \mathbb{H} $ be a finite-dimensional Hilbert space. Let $ T, A \in \mathbb{L}(\mathbb{H}) $. Then $ T \perp_B A $ if and only if there exists $ x\in M_T $ such that $ \langle Tx, Ax \rangle = 0.$}}\\

\begin{proof}
The verification of sufficiency is straightforward; hence we omit it. Now, assume that $ \langle Tx, Ax \rangle \neq 0$ for all $x\in M_T.$ In other words, $0\notin W_A(T):=\left\{ \langle Ax,Tx \rangle :~ x\in M_T \right\}.$ Since $\mathbb{H}$ is finite-dimensional, $W_A(T)$ is a compact convex subset of $\mathbb{C}$ (Corollary \ref{Generalization of Toeplitz-Hausdorff Theorem M}). Therefore, $W_A(T)$ possesses a unique closest point from $ 0 $, say $ \mu. $ Let $\kappa = \dfrac{\mu}{|\mu|}.$ Therefore, $ \mathsf{Re}~\overline{\kappa} \langle Ax,Tx \rangle \geq |\mu|, $ for all $ x\in M_T $. Thus, by Theorem \ref{Characterization of orthogonality in Hilbert space in a particular direction}, $ Tx \not\perp_{\overline{\kappa}} Ax $ for all $ x\in M_T $. Note that the set $M_T$ is
connected, because by \cite{SP}, it is equal to the (connected) circle $S_{\mathbb{H}_0}$ for some subspace $\mathbb{H}_0$ of $\mathbb{H}.$ Hence by Theorem \ref{Relation between direction and M_T}, $T\not\perp_B A$.
 
\end{proof}

\begin{acknowledgement}
The research of Dr. Debmalya Sain is sponsored by Dr. D. S. Kothari Post-doctoral fellowship. Dr. Sain feels elated to acknowledge the loving friendship of Krystina DeLeon. The research of Mr. Saikat Roy is supported by CSIR MHRD in form of Senior Research Fellowship under the supervision of Prof. Satya Bagchi.\\

The authors are extremely thankful to the anonymous referee for his/her many helpful comments which improved the overall outfit of our paper.
\end{acknowledgement}

\end{document}